\title[Parameterizations of power-subanalytic sets]{Smooth parameterizations of power-subanalytic sets and compositions of Gevrey functions}
\author[S. Van Hille]{Siegfried Van Hille}
\address{KU Leuven, Celestijnenlaan 200B, 3001 Leu\-ven, Bel\-gium}
\email{siegfried.vanhille@kuleuven.be}
\date{\today}
\DeclareMathOperator{\R}{\mathbb{R}}
\DeclareMathOperator{\N}{\mathbb{N}}
\DeclareMathOperator{\Q}{\mathbb{Q}}
\DeclareMathOperator{\im}{\text{Im}}
\newcommand\green[1]{{\color{black}#1}}
\theoremstyle{definition}
\newtheorem{definition}{Definition}[section]
\theoremstyle{plain}
\newtheorem{theorem}{Theorem}[section]
\newtheorem{corollary}{Corollary}[theorem]
\newtheorem{lemma}[theorem]{Lemma}
\newtheorem{proposition}[theorem]{Proposition}
\newtheorem*{theorem*}{Theorem}
\newtheorem*{corollary*}{Corollary}
\begin{document}
\thanks{The author is partially supported by KU Leuven grant IF C14/17/083.}

\keywords{Gevrey functions, mild functions, subanalytic sets, $C^r$-parameterization, Weierstrass preparation.}

\subjclass[2010]{03C98, 14P15, 26E10, 32B05, 32B20}

\begin{abstract}
We show that if $X$ is an $m$-dimensional definable set in $\mathbb{R}_\text{an}^\text{pow}$, the structure of real subanalytic sets with real power maps added, then for any positive integer $r$ there exists a $C^r$-parameterization of $X$ consisting of $cr^{m^3}$ maps for some constant $c$. Moreover, these maps are real analytic and this bound is uniform for a definable family.
\end{abstract}

\maketitle

\section{Introduction}
A parameterization of a set $X \subset \R^n$ of dimension $m$ is a finite set of maps $$\{f_i: (0,1)^m \to X \mid i \in \{1,\ldots,N\}\}$$ for some $N \in \N$, such that the images of these maps cover $X$. Consider the following question: given any positive integer $r$, can one construct a parameterization of $X$ consisting of $r$ times continuously differentiable maps whose $C^r$-norm is bounded by $1$, i.e. a $C^r$-parameterization?

The interest in this question first arose in 1987 \green{in the study of entropy of dynamical systems in  \cite{Y87} and \cite{yomdin}. In these papers}, Yomdin sketched a $C^r$-parameterization result for semi-algebraic sets. A more refined explanation was given by Gromov in \cite{gromov}, a complete proof by Burguet in \cite{B08}. In their result the number of maps of the parameterization is shown to depend on combinatorial data defining $X$, i.e. $n$, $m$, $r$ and the complexity $\beta$, which is the maximum degree of the equations and inequalities necessary to define $X$. However, there is no explicit formula for the number of maps. Yomdin has written a nice survey on the study of parameterizations in \cite{survey}. 

Using o-minimality, a tool of model theory, Pila and Wilkie showed in 2006 that any bounded definable set in any o-minimal structure (containing the semi-algebraic sets) has a $C^r$-parameterization \cite{count}.  \green{However, in the special case that $X$ is semi-algebraic, they did not deduce a formula for the number of maps of the parametrization in terms of the complexity of $X$.}
The main result of \cite{count} bounds the number of rational points on a definable set $X$ up to a certain height, known as the counting theorem. To obtain this result, they constructed $C^r$-parameterizations for arbitrarily large $r$ and therefore it is interesting to know how large the parameterization becomes depending on $r$.

The dependence on $r$ has recently been investigated in \cite{ccs} by Binyamini and Novikov, who construct a $C^r$-parameterization consisting of $cr^m$ maps for subanalytic sets. Here, $c$ is a constant that depends on $X$.  Moreover, they show that $c$ depends polynomially on the complexity of $X$, if $X$ is semi-algebraic. Around the same time, Cluckers, Pila and Wilkie proved in \cite{unif} a $C^r$-parameterization result for power-subanalytic sets using $cr^d$ maps, where $c$ and $d$ are constants depending on $X$, and this result is uniform.  

In this paper, we built on the methods of \cite{unif} to further refine their result in Theorem (\ref{maintheorem}). It is weaker than the result of \cite{ccs} for subanalytic sets, but holds \green{for} the larger class of power-subanalytic sets.

\begin{theorem*}
If $X \subset [-1,1]^n$ is power-subanalytic of dimension $m \leq n$, then for any positive integer $r$ there exists a $C^r$-parameterization of $X$ consisting of $cr^{m^3}$ maps whose $C^r$-norm is bounded by $1$. Moreover, if $X$ belongs a power-subanalytic family of such sets, the constant $c$ holds for all members of the family.
\end{theorem*}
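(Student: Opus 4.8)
The plan is to follow and quantitatively refine the strategy of \cite{unif}: reduce the theorem to a reparameterization statement for power-subanalytic \emph{functions}, prove that statement by induction on the number of variables using the preparation theorem in the last variable, and replace the ad hoc analytic estimates of \cite{unif} by uniform Gevrey (equivalently, mild) estimates that are stable under composition with the reparameterizing maps, which is precisely what makes it possible to bound the number of pieces explicitly by a polynomial in $r$. Concretely, by cell decomposition in $\mathbb{R}_{\mathrm{an}}^{\mathrm{pow}}$ — whose number of cells depends only on the complexity of $X$, not on $r$ — and by absorbing compositions with semialgebraic maps (which admit classical $C^{r}$-parameterizations of controlled size), the theorem reduces to: given a power-subanalytic $f\colon(0,1)^{m}\to[-1,1]$, there is a family of at most $c(f)\,r^{m^{3}}$ maps $\phi_{j}\colon(0,1)^{m}\to(0,1)^{m}$ — built from coordinatewise power substitutions $t\mapsto t^{N}$, affine rescalings, and semialgebraic rectifications — with $\|\phi_{j}\|_{C^{r}}\le 1$, whose images cover $(0,1)^{m}$, and such that each $f\circ\phi_{j}$ is $C^{r}$ with $\|f\circ\phi_{j}\|_{C^{r}}\le 1$; uniformity over a family will be inherited from the uniformity of the preparation theorem used below.

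The next step is preparation followed by the treatment of the last variable. By the preparation theorem for power-subanalytic functions, after a partition of $(0,1)^{m}$ into boundedly many power-subanalytic pieces, on each piece one has
$$f(x',x_{m}) = a(x')\,\bigl|x_{m}-\theta(x')\bigr|^{\lambda}\,U\!\bigl(x',(x_{m}-\theta(x'))^{1/q}\bigr),$$
where $x'=(x_{1},\dots,x_{m-1})$, the centre $\theta$ and the coefficient $a$ are power-subanalytic on the base, $\lambda\in\Q$ is bounded below by some $\lambda_{\min}>0$ over the partition, $q\in\N$, and $U(x',y)$ is a unit: real analytic in $y$ near $y=0$ and bounded between two positive constants, uniformly in $x'$. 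A semialgebraic rectification turning $x_{m}-\theta(x')$ into a coordinate $t$, followed by the substitution $t\mapsto t^{N}$ with $N=\lceil r/\lambda_{\min}\rceil$, makes the factor $|x_{m}-\theta|^{\lambda}$ into $t^{N\lambda}$ with $N\lambda\ge r$, hence $C^{r}$, and turns $U$ into a convergent power series in $t^{N/q}$ that is analytic up to $t=0$.

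The heart of the argument — and what I expect to be the main obstacle — is the induction on $m$ together with the Gevrey closure. What remains to be parameterized are the power-subanalytic functions of $x'\in(0,1)^{m-1}$ occurring in the displayed formula: the centre $\theta$, the coefficient $a$, and, crucially, the coefficients of $U(x',\cdot)$ as a series in $y$. The latter is an \emph{infinite} family, so it cannot be handled one function at a time by the inductive hypothesis; the resolution is to establish, as a separate analytic input, that these coefficient functions satisfy uniform Gevrey bounds, and that precomposition with the $(m-1)$-variable power-substitution maps supplied by the induction preserves Gevrey bounds with controlled constants — this is the composition-of-Gevrey-functions result advertised in the title. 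Consequently the reassembled function $U(\phi'(x'),t^{N/q})$ is again analytic in suitable coordinates with radius of convergence bounded below, uniformly. Getting the preparation theorem into a form uniform in the auxiliary variable $y$, and propagating the Gevrey estimates through all the rectifications, power substitutions, and rescalings with constants that do not blow up with $r$, is where essentially all the work lies.

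Finally, once each $f\circ\phi_{j}$ is analytic with radius of convergence at least $1+\delta$ in the appropriate coordinates, the Cauchy estimates give $\|f\circ\phi_{j}\|_{C^{r}}\le M\,r!\,\delta^{-r}$, so subdividing $(0,1)^{m}$ into subboxes of side $\asymp\delta/r$ and composing with affine rescalings brings both $\|\phi_{j}\|_{C^{r}}$ and $\|f\circ\phi_{j}\|_{C^{r}}$ below $1$, at the cost of $O(r^{m})$ extra pieces. Tracking the count through the recursion — each of the $m$ stages introduces a number of pieces polynomial in $r$, the degrees accumulating because the substitution exponents and the subdivisions interact with all of the remaining coordinates at every stage — yields a bound of the form $c\,r^{m^{3}}$ after absorbing all $r$-independent quantities into $c$, which depends only on the complexity of $X$ and hence is uniform over a power-subanalytic family.
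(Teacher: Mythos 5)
Your overall strategy is the paper's: reduce to graphs of definable functions, prepare in the last variable, induct on the number of variables, use power substitutions to tame the fractional exponents, control everything through explicit Gevrey/mild composition estimates, and finish with an affine subdivision plus Lemma (\ref{ACsub}). However, two of the steps you flag as ``where the work lies'' are resolved in the paper by ideas that are absent from your sketch, and your quantitative accounting does not actually produce the exponent $m^{3}$. First, the unit. You expand $U(x',y)$ as a power series in $y$ and face an infinite family of coefficient functions of $x'$, proposing to prove uniform Gevrey bounds for all of them; the paper never does this. In Miller's preparation as used here (Definition (\ref{prepx})), the unit is a single analytic non-vanishing function $F$ evaluated on the \emph{bounded range map} $b=(b_{1},\dots,b_{N})$, so Lemma (\ref{unitmild}) plus the composition results (Corollary (\ref{compformula}), Proposition (\ref{thmweakmild})) handle it in one stroke, and the induction on $m$ is applied only to finitely many functions of $x_{<m}$: the walls and the limits $h_{\alpha},h_{\beta},g_{\alpha},g_{\beta}$ of $b$ and $\partial b/\partial x_{m}$ at the walls, from which the coefficients $a_{i}(x_{<m})$ are recovered. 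Related to this, you omit the normalization $|\partial b/\partial x_{m}|\le 1$ (the $C^{1}$-boundedness in $x$), which Theorem (\ref{parameter}) arranges by a change of variables; without it the power substitution of Proposition (\ref{powersub}) does not yield mildness up to order $r$ (compare $x\mapsto x^{1/2}$ composed with $x\mapsto x^{3}$). Also, for $\mathbb{R}_{\mathrm{an}}^{\mathrm{pow}}$ the exponents are real, not rational, and need not be bounded below by a positive $\lambda_{\min}$: the cases $\lambda\le 0$ force the separate treatment via the lower versus upper wall that occupies a good part of the induction.

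Second, the count. Your mechanism --- each $f\circ\phi_{j}$ analytic with radius $\ge 1+\delta$, hence subboxes of side $\asymp\delta/r$ and $O(r^{m})$ extra pieces per stage --- would give at most $r^{m^{2}}$ over $m$ stages, and in fact cannot hold here: uniform analyticity with $r$-independent radius is exactly what fails for power-subanalytic sets (it is why the $cr^{m}$ bound of Binyamini--Novikov is not recovered). What actually happens is that the power substitution must use exponents $r^{m},r^{m-1},\dots,r$ in the successive variables, because the wall of $x_{i}$ depends on $x_{<i}$ and one must take its $r^{m-i+1}$-th root after substitution while keeping mildness up to order $r$ (Propositions (\ref{bmpower}) and (\ref{wallsub})); this makes the walls and $f\circ\phi_{r}$ only $(Ar^{m},0)$-mild up to order $r$. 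Composing the $m$ wall-straightening maps $\Phi_{i}$ then yields an $(A''r^{m^{2}},0)$-mild map, and the final subdivision is into cubes of side $1/(A''r^{m^{2}})$ in $m$ dimensions, giving $cr^{m^{3}}$ pieces. Until you identify this cascade --- substitution exponent $r^{m}$, composition loss $r^{m^{2}}$, subdivision $r^{m^{3}}$ --- the exponent in your statement is asserted rather than proved.
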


The theorem above and the results of \cite{unif} actually hold for structures $\R^K_\mathcal{F}$, which were studied by Miller in \cite{mil}. These structures expand the semi-algebraic sets with restricted analytic functions in a Weierstrass system $\mathcal{F}$ and power maps $x \mapsto x^\mu$ for any $\mu \in K$, where $K$ is a subfield of the field of exponents of $\mathcal{F}$ (see the introduction of Section \ref{main section} for precise definitions). The structures $\R$, $\R_\text{an}$ and $\R^\text{pow}_\text{an}$, corresponding to the semi-algebraic, subanalytic and power-subanalytic sets respectively, are all examples of this class of structures. The main result of \cite{mil}, is a preparation theorem for functions definable in these structures, which is one of the main ingredients for the proof of the main result. 

In 2011, Jones, Miller and Thomas have shown in \cite{mildpara} that any set definable in (any reduct of) $\mathbb{R}_\text{an}$ expanding the real field has an $(A,0)$-mild parameterization for some $A>0$, i.e. a parameterization consisting of maps that are $(A,0)$-mild. \green{The precise definition of a mild function is given in Section \ref{mild functions}.} Informally, for $C \geq 0$, an $(A,C)$-mild function is a smooth function with good bounds on the derivatives depending on $A$ and $C$.  For any integer $r > 0$, these bounds allow us to bound the $C^r$-norm by $1$ after a suitable substitution. Since we only control the derivatives up to order $r$, their result is stronger. However, by an elementary example of Yomdin \cite[Proposition 3.3]{example}, their result cannot be made uniform. Indeed, the family of hyperbolas $\{ xy = t^2 \mid (x,y) \in (0,1)^2, t \in (0,1)\}$ does not have a uniform $(A,0)$-mild parameterization, i.e. the number of charts will depend on the parameter $t$. Recently it has been shown in \cite{expmild} that it has a uniform $(A,C)$-mild parameterization for all $C>0$.

The paper is organized as follows. In Section \ref{mild functions}, we prove various properties of mild functions using only standard techniques of real analysis. A key result on the composition of these functions, Theorem (\ref{thmgevrey}), permits us to make all of the results of \cite{unif} on mild functions explicit and thus also the constant $d$. This key result is in fact an old result on Gevrey functions \cite{ge}. We will give the original proof of this result, but in full generality, and reformulate it in terms of mild functions. In Section \ref{main section} we provide the necessary background in model theory to state the main theorem and we prove the main theorem. Finally we explain that when $m \geq 2$, we can in fact obtain a parameterization consisting of $cr^{m^2(m-1)}$ maps.

\section{Mild functions} \label{mild functions}
Mild functions are a class of $C^\infty$-maps introduced by Pila in \cite{milddef}. \green{The upper bounds on the derivatives of a mild function are very suitable to use the determinant method. This method, which Pila developed with Bombieri \cite{determinant}, a useful tool in diophantine geometry. For example, the determinant method is used in the proof of the counting theorem in \cite{count}.} 

Cluckers, Pila and Wilkie introduced several variants to mild functions in \cite{unif}. Since in their result (and our main theorem), they only cared about derivatives up to order $r$\green{, they defined functions that are mild up to order $r$. These functions are $C^{r}$-maps that satisfy the same bounds on the derivatives as mild functions, but up to order $r$. In fact, the maps are actually smooth in their work.} 


Since we will encounter many compositions of mild maps, we need a result on compositions of these maps. \green{This result on compositions will allow us to find the number of charts in Section \ref{main section}.} One can think of mild functions as functions of some class indexed by a real number $C \geq 0$. When $C = 0$, these maps are real analytic, when $C= +\infty$, they are real smooth. Using the theory of Gevrey functions \cite{ge}, we show that compositions of mild functions of class $C$ are again of class $C$.

\vspace{0.3cm}We start with some definitions of multidimensional calculus. Throughout this section we will work with maps $f = (f_1,\ldots,f_n): U \subset \R^d \to \R^n$, where $U$ is always assumed to be open in $\R^d$. We say that $f: U \subset \R^d \to \R$ is $C^r$ for a natural number $r$ (or $+\infty$) if $f$ is $r$ times continuously differentiable on $U$ and $f = (f_1,\ldots,f_n): U \subset \R^d \to \R^n$ is $C^r$ if $f_1,\ldots,f_n$ are all $C^r$. For any $i \in \{1,\ldots,d\}$, denote $(\partial f / \partial x_i) = (\partial f_1 / \partial x_i,\ldots, \partial f_n / \partial x_i)$. For any multi-index $\nu = (\nu_1,\ldots,\nu_d) \in \N^d$, map $f: U \to \R^n$ and $x = (x_1,\ldots,x_d) \in U$ we set:
\begin{align*}
|\nu| &= \nu_1 + \ldots + \nu_d \\
\nu! &= \nu_1!\cdots \nu_d! \\
x^\nu &= \prod_{i=1}^d x_i^{\nu_i} \\
f^{(\nu)} &= \frac{\partial^{|\nu|}}{\partial x_1^{\nu_1}\cdots \partial x_d^{\nu_d}}f
\end{align*}
where by definition $0! = 1$, and $0^0 = 1$. 

\begin{definition}[$C^r$-norm]\label{norm}
Suppose that $f: U \subset \R^d \to \R$ is $C^r$. Then we define the $C^r$-norm $|\cdot|_r$ of $f$ as follows:
\begin{equation*}
|f|_r = \sup_{x \in U} \sup_{\substack{|\nu| \leq r \\ \nu \in \N^d}} \frac{|f^{(\nu)}(x)|}{|\nu|!}.
\end{equation*}
Note that $|f|_r$ can be $+\infty$. We define the $C^r$-norm of a map $f:U \subset \R^d \to \R^n$ to be the maximum of the $C^r$-norms of the component functions $f_1,\ldots,f_n$.
\end{definition}

This is the norm used in \cite{ccs}. In \cite{unif} they did not divide by $|\nu|!$. This yields equivalent norms but has an impact on the exponent of $r$ in the main theorem (see Lemma (\ref{ACsub})).

\begin{definition}[Mild functions] \label{defmild}
Suppose that $A,B>0$, $C \geq 0$ are real numbers and $f: U \subset \R^d \to \R$. Then $f$ is called $(A,B,C)$-mild if it is $C^\infty$ and if for all $\nu \in \N^d$ and $x \in U$: 
\begin{equation*}
\left| f^{(\nu)}(x)\right| \leq B^{C+1}A^{|\nu|}|\nu|!^{C+1}.
\end{equation*}
A map $f: U \subset \R^d \to \R^n$ is $(A,B,C)$-mild if all component functions $f_1,\ldots,f_n$ are $(A,B,C)$-mild. If $B \leq 1$, we simply say that $f$ is $(A,C)$-mild and if we say that $f$ is mild, then we mean that $f$ is $(A,C)$-mild for some $A>0$ and $C\geq 0$.
\end{definition}

This definition is slightly different than the original one by Pila in \cite{milddef} (\cite{milddef2} for the multivariate case), where the bounds on the derivatives for an $(A,C)$-mild function are given by $\nu!(A|\nu|^C)^{|\nu|}$. Now, due to the following inequalities:
\begin{align*}
\nu! &\leq |\nu|! \leq d^{|\nu|}\nu! \\
|\nu|! &\leq |\nu|^{|\nu|} \leq e^{|\nu|}|\nu|!
\end{align*}
(where the number $e$ appears as a result of Stirling's formula), one sees that the definitions of $(A,C)$-mild coincide in the following way: $f$ is $(A_1,C)$-mild for some $A_1>0$ as in our definition if and only if $f$ is $(A_2,C)$-mild for some $A_2 > 0$ as in \cite{milddef2}. The reason to adjust the definition is to keep the proof of Theorem (\ref{thmgevrey}) as simple as possible.

Following \cite{unif}, we will use `up to order $r$' versions of many definitions and theorems, for example the following definition.

\begin{definition}[Mild up to order $r$] \label{defmildor}
Suppose that $A,B>0$, $C \geq 0$ are real numbers, $r > 0$ an integer or $+\infty$ and $f: U \subset \R^d \to \R$. Then $f$ is called $(A,B,C)$-mild up to order $r$ if it is $C^r$ and if for all $\nu \in \N^d$ with $|\nu| \leq r$ and $x \in U$: 
\begin{equation*}
\left| f^{(\nu)}(x)\right| \leq B^{C+1}A^{|\nu|}|\nu|!^{C+1}.
\end{equation*}
A map $f: U \subset \R^d \to \R^n$ is $(A,B,C)$-mild up to order $r$ if all component functions $f_1,\ldots,f_n$ are $(A,B,C)$-mild up to order $r$. If $B \leq 1$, we simply say that $f$ is $(A,C)$-mild up to order $r$ and if we say that $f$ is mild up to order $r$, then we mean that $f$ is $(A,C)$-mild up to order $r$ for some $A>0$ and $C \geq 0$.
\end{definition}

Note $(A,B,C)$-mild up to order $+\infty$ agrees with $(A,B,C)$-mild. The following corollary is an immediate consequence of the definitions.

\begin{corollary}
Suppose $f: U \subset \R^d \to \R^n$ is $C^r$. Then $f$ has $C^r$-norm less than or equal to $B$ if and only if $f$ is (1,B,0)-mild up to order $r$.
\end{corollary}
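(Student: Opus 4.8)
The plan is to simply unwind the two definitions and observe that, once the parameters are specialized to $A = 1$ and $C = 0$, they assert literally the same family of inequalities. First I would reduce to the scalar case $n = 1$: both quantities in the statement are defined componentwise — the $C^r$-norm of a map $f = (f_1,\ldots,f_n)$ is by Definition (\ref{norm}) the maximum of the $C^r$-norms of the $f_i$, while by Definition (\ref{defmildor}) the map $f$ is $(1,B,0)$-mild up to order $r$ exactly when each $f_i$ is — so it suffices to prove the equivalence for a single $C^r$ function $f: U \to \R$, and then take the maximum (resp. the conjunction) over $i \in \{1,\ldots,n\}$.

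For such a scalar $f$, I would substitute $A = 1$ and $C = 0$ directly into Definition (\ref{defmildor}): the bound $|f^{(\nu)}(x)| \leq B^{C+1} A^{|\nu|} |\nu|!^{C+1}$ becomes $|f^{(\nu)}(x)| \leq B\,|\nu|!$, required for all multi-indices $\nu$ with $|\nu| \leq r$ and all $x \in U$. On the other hand, $|f|_r \leq B$ means by Definition (\ref{norm}) that $\sup_{x \in U}\sup_{|\nu|\leq r} |f^{(\nu)}(x)|/|\nu|! \leq B$, i.e. $|f^{(\nu)}(x)|/|\nu|! \leq B$ for every such $\nu$ and $x$; since $|\nu|! \geq 1$ this is the same inequality $|f^{(\nu)}(x)| \leq B\,|\nu|!$. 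Thus the two conditions on $f$ are identical, and the claimed ``if and only if'' is immediate in both directions.

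There is essentially no obstacle here; the only points worth noting are that the $C^r$ regularity assumed in the corollary's hypothesis matches precisely the $C^r$ requirement already built into Definition (\ref{defmildor}), so nothing extra must be checked on that front, and that the passage from the scalar case to maps into $\R^n$ uses only that ``maximum over components is $\leq B$'' is equivalent to ``each component is $\leq B$''. I would therefore write the argument as a short paragraph rather than a displayed multi-step proof.
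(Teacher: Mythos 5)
Your proposal is correct and is exactly what the paper intends: the paper offers no written proof, declaring the corollary an immediate consequence of the definitions, and your unwinding (setting $A=1$, $C=0$ so that the mildness bound becomes $|f^{(\nu)}(x)|\leq B\,|\nu|!$, which is precisely $|f|_r\leq B$, together with the componentwise reduction) is that verification spelled out.
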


For $(A,C)$-mild functions up to order $r$, one can bound the $C^r$-norm by $1$ with an easy substitution. It will be used to count the numbers of maps in the proof of the main theorem. More precisely, we will construct a parameterization consisting of $(A,0)$-mild maps up to order $r$.

\begin{lemma}\label{ACsub}
Suppose that $f: U \subset (0,1)^d \to [-1,1]$ is $(A,C)$-mild up to order $r$. Let $P = (P_1,\ldots,P_d)$ be any point in $(0,1)^d$ and consider the map $$\psi: (x_1,\ldots,x_d) \mapsto \left(\frac{x_1}{Ar^C}+P_1,\ldots,\frac{x_d}{Ar^C}+P_d\right).$$ If $V = \psi^{-1}(U)$ then $(f \circ \psi): V \to [-1,1]$ has $C^r$-norm bounded by $1$.
\end{lemma}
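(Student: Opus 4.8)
The plan is to compute the derivatives of the composition $f \circ \psi$ directly via the chain rule and exploit that $\psi$ is affine, so all higher-order chain-rule terms vanish. First I would observe that since $\psi$ is the affine map $x_i \mapsto x_i/(Ar^C) + P_i$, for any multi-index $\nu \in \N^d$ with $|\nu| \le r$ one has the exact identity
\begin{equation*}
(f \circ \psi)^{(\nu)}(x) = \frac{1}{(Ar^C)^{|\nu|}}\, f^{(\nu)}(\psi(x)).
\end{equation*}
This is the only genuine computation, and it is immediate because each partial $\partial/\partial x_i$ picks up exactly one factor $1/(Ar^C)$ from the $i$-th coordinate of $\psi$ and no cross terms arise.

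Next I would plug in the mildness bound. Since $f$ is $(A,C)$-mild up to order $r$, for $|\nu| \le r$ and any point in $U$ we have $|f^{(\nu)}| \le A^{|\nu|}|\nu|!^{\,C+1}$ (here $B \le 1$). Combining with the identity above gives, for all $x \in V$ and all $|\nu| \le r$,
\begin{equation*}
\frac{|(f\circ\psi)^{(\nu)}(x)|}{|\nu|!} \le \frac{A^{|\nu|}|\nu|!^{\,C+1}}{(Ar^C)^{|\nu|}\,|\nu|!} = \frac{|\nu|!^{\,C}}{r^{C|\nu|}} = \left(\frac{|\nu|!}{r^{|\nu|}}\right)^{C}.
\end{equation*}
Since $|\nu| \le r$, the elementary inequality $|\nu|! \le |\nu|^{|\nu|} \le r^{|\nu|}$ shows the quantity in parentheses is at most $1$, and as $C \ge 0$ the whole expression is bounded by $1$. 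Taking the supremum over $x \in V$ and over $|\nu| \le r$ yields $|f \circ \psi|_r \le 1$, which is the claim. For a map into $[-1,1]^n$ one applies this component-wise and takes the maximum.

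There is essentially no serious obstacle here; the lemma is a bookkeeping statement. The only point requiring a little care is making sure the factor $r^C$ (rather than, say, $r^{C+1}$ or $A^C$) is exactly the right normalization — this is precisely why the $C^r$-norm is defined with the division by $|\nu|!$, and it is the reason the remark preceding the lemma notes that omitting that division (as in \cite{unif}) changes the exponent of $r$ in the main theorem. One should also note in passing that $V = \psi^{-1}(U)$ is open, so $f \circ \psi$ is genuinely $C^r$ on its domain and the $C^r$-norm is well defined.
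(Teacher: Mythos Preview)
Your proposal is correct and is precisely the direct chain-rule computation the paper alludes to in its one-line proof. You have simply spelled out the details the paper omits; there is no substantive difference in approach.
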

\begin{proof}
This is a direct calculation using the chain rule.
\end{proof}

This lemma is nearly identical to \cite[Lemma 4.1.3]{unif}. The difference is the exponent of $r$, which is $C+1$ in \cite{unif}, because they use the following norm:
$$\sup_{x \in U} \sup_{\substack{|\nu| \leq r \\ \nu \in \N^d}} |f^{(\nu)}(x)|.$$ Since this norm is stronger, the number of maps in the $C^r$-parameterization is larger.

We conclude this section with the following class of examples which will show up in Section \ref{main section}. For a proof we refer to \cite[Proposition 2.2.10]{primer}.

\begin{lemma}\label{unitmild}
Suppose that $f: U \subset \R^d \to \R$ is analytic on an open neighborhood of the topological closure $\overline{U}$ of $U$. Then $f$ is (A,B,0)-mild for some $A,B>0$.
\end{lemma}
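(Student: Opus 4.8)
The plan is to exploit the standard fact that an analytic function on a neighborhood of a compact set has a uniform radius of convergence and uniformly bounded Taylor coefficients there, and then to translate this into the mild bound on derivatives. First I would use that $f$ is analytic on an open set $W \supset \overline{U}$; by shrinking I may assume $W$ is bounded, so $K := \overline{W}$ is compact (if $U$ itself is unbounded one works locally and patches, but in the intended application $U$ is bounded, so I will assume $\overline{U}$ compact). Choose $\rho > 0$ so that for every $a \in \overline{U}$ the closed polydisc of polyradius $(\rho,\ldots,\rho)$ centered at $a$ (in $\mathbb{C}^d$, using a complex-analytic extension of $f$, which exists on a complex neighborhood of $K$ by real-analyticity) is contained in that complex neighborhood; this is possible by compactness. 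By Cauchy's estimates on each such polydisc, there is a constant $M>0$, independent of $a$, with
\begin{equation*}
|f^{(\nu)}(a)| \le M\, \frac{\nu!}{\rho^{|\nu|}} \qquad \text{for all } \nu \in \N^d,\ a \in U.
\end{equation*}

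The next step is purely combinatorial: I convert the bound $M\nu!\rho^{-|\nu|}$ into the required form $B^{0+1}A^{|\nu|}|\nu|!^{0+1} = B\,A^{|\nu|}|\nu|!$. Using the elementary inequality $\nu! \le |\nu|!$ recorded in the excerpt, together with $\rho^{-|\nu|} = (1/\rho)^{|\nu|}$, we get
\begin{equation*}
|f^{(\nu)}(a)| \le M\, (1/\rho)^{|\nu|}\, |\nu|! .
\end{equation*}
Hence setting $A := 1/\rho$ and $B := M$ (or $B := \max\{M,1\}$ if one wants to be safe about the sign of the exponent, though here $C=0$ makes $B^{C+1}=B$ and no such care is needed) shows $f$ is $(A,B,0)$-mild. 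If one insists on $B \le 1$ in order to get the $(A,0)$-mild conclusion one may instead enlarge $A$: replace $\rho$ by $\rho/M^{1/|\nu|}$ is not uniform in $\nu$, so instead note $M (1/\rho)^{|\nu|} \le (1/\rho')^{|\nu|}$ for $|\nu|\ge 1$ once $1/\rho' \ge M/\rho$, and handle $|\nu|=0$ separately by shrinking if $f$ is bounded by $1$; but the statement as given only asks for some $A,B>0$, so the simple choice above suffices.

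The main obstacle, and the only genuine point requiring care, is the passage from "real-analytic on a neighborhood of $\overline{U}$" to a \emph{uniform} complex polydisc radius $\rho$ and a \emph{uniform} Cauchy bound $M$ valid simultaneously for all base points $a \in U$: this is where compactness of $\overline{U}$ is essential and is exactly the hypothesis the lemma provides. Everything after that is the classical Cauchy estimate plus the trivial inequality $\nu! \le |\nu|!$. Since a full write-up is available in \cite[Proposition 2.2.10]{primer}, I would keep the argument brief and cite it, as the excerpt already does.
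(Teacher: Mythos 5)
Your argument is correct and is exactly the standard Cauchy-estimate proof underlying the reference the paper cites: the paper itself gives no proof of Lemma (\ref{unitmild}), only the pointer to \cite[Proposition 2.2.10]{primer}, and your chain (holomorphic extension, uniform polyradius by compactness, Cauchy estimates, then $\nu!\leq|\nu|!$ to pass from $M\nu!/\rho^{|\nu|}$ to $BA^{|\nu|}|\nu|!$) is the intended one. Your caveat that $\overline{U}$ must be compact is well taken --- as literally stated the lemma needs $U$ bounded (e.g.\ $e^{x^{2}}$ on $\R$ is a counterexample otherwise), but in every application in the paper $U$ is contained in $(0,1)^{d}$, so this is harmless.
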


\subsection{Compositions of mild functions}
In this section we show that if $f$ and $g$ are $(A_f,B_f,C)$- and $(A_g,B_g,C)$-mild respectively, then the composition $f\circ g$ is $(A,B,C)$-mild for some explicit $A$ and $B$. Of course, we will need a multivariate version of a formula for arbitrary derivatives of a composite function. This is known as the Fa\`a di Bruno formula, which has first been proved in \cite{faa} and we reformulate it here. A proof can also be found in \cite[Theorem 1.3.2]{primer}.

\begin{theorem}\label{thmfaa}
Suppose that $n$ is a positive integer, $V \subset \R^d$ and $U \subset \R^e$ are open, $f: V \to \R$, $g: U \to V$ and that $f$ and $g$ are $C^n$. For any $x \in U$ and $\nu \in \N^e$ with $|\nu| = n$ we have that $$(f \circ g)^{(\nu)}(x) = \sum_{1 \leq |\lambda| \leq n} f^{(\lambda)}(g(x)) \sum_{s = 1}^n \sum_{p_s(\nu,\lambda)} \nu! \prod_{j = 1}^s \frac{(g^{(l_j)}(x))^{k_j}}{k_j!(l_j!)^{{|k_j|}}}$$ where $p_s(\nu,\lambda)$ is the set consisting of all $k_1,\ldots,k_s \in \N^d$ with $|k_i| > 0$ and $l_1,\ldots,l_s \in \N^e$ with $0 \prec l_1 \prec \ldots \prec l_s$ such that: $$ \sum_{i = 1}^sk_i = \lambda$$ and $$\sum_{i = 1}^s |k_i|l_i = \nu.$$ Here $l_i \prec l_{i+1}$ means that $|l_i| < |l_{i+1}|$ or, if $|l_i| = |l_{i+1}|$, then $l_i$ comes lexicographically before $l_{i+1}$.
\end{theorem}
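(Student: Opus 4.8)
The plan is to prove the formula by induction on $n = |\nu|$, differentiating the known lower-order formula one variable at a time and then reorganizing the resulting combinatorial sum. Here $f^{(\lambda)}$ is taken with respect to the $d$ variables of $V$, while $(f\circ g)^{(\nu)}$ and $g^{(l_j)}$ are taken with respect to the $e$ variables of $U$; recall that $g^{(l_j)}(x)\in\R^d$, so $(g^{(l_j)}(x))^{k_j}$ denotes the multi-index power $\prod_{i=1}^d (g_i^{(l_j)}(x))^{(k_j)_i}$.

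For the base case $n = 1$ we have $\nu = e_t$ (the $t$-th standard basis vector of $\R^e$) for some $t$, and the asserted identity is exactly the chain rule $(f\circ g)^{(e_t)}(x) = \sum_{i=1}^d f^{(e_i)}(g(x))\,g_i^{(e_t)}(x)$: on the right-hand side only $|\lambda| = 1$ and $s = 1$ contribute, forcing $\lambda = e_i$, $k_1 = e_i$, $l_1 = e_t$, and all the factorials $\nu!$, $k_1!$, $(l_1!)^{|k_1|}$ equal to $1$.

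For the inductive step, assume the formula for all multi-indices of length $n$ and take $\nu'$ with $|\nu'| = n+1$. Choose $t$ with $\nu'_t \geq 1$, put $\nu = \nu' - e_t$, and write $(f\circ g)^{(\nu')}(x) = \partial_{x_t}\bigl[(f\circ g)^{(\nu)}(x)\bigr]$. Inserting the induction hypothesis and applying $\partial_{x_t}$ produces two kinds of terms: differentiating the outer factor $f^{(\lambda)}(g(x))$ gives, via the chain rule, $\sum_{i=1}^d f^{(\lambda+e_i)}(g(x))\,g_i^{(e_t)}(x)$ times the unchanged inner product; and, by the product rule, differentiating the inner product $\prod_{j=1}^s \bigl(g^{(l_j)}(x)\bigr)^{k_j}$ replaces one factor $g_i^{(l_j)}(x)$ by $g_i^{(l_j+e_t)}(x)$. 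It remains to collect all of these contributions and rewrite them in the form $\sum_{1 \le |\lambda| \le n+1} f^{(\lambda)}(g(x)) \sum_s \sum_{p_s(\nu',\lambda)} \nu'! \prod_j \frac{(g^{(l_j)}(x))^{k_j}}{k_j!(l_j!)^{|k_j|}}$. Concretely, a fixed tuple $(k_1,l_1,\ldots,k_s,l_s)\in p_s(\nu',\lambda)$ is produced by several of the differentiated terms above --- from tuples in $p_{s'}(\nu,\lambda - e_i)$ with the extra factor $g_i^{(e_t)}$ playing the role of a block with $l = e_t$ (either a new block, or an increment of an existing block whose index equals $e_t$), and from tuples in $p_{s'}(\nu,\lambda)$ in which some $g_i^{(l_j)}$ has been promoted to $g_i^{(l_j+e_t)}$ --- and one checks that the number of such contributions, combined with $\nu'! = \nu'_t\,\nu!$ and with the change in the factors $k_j!$ and $(l_j!)^{|k_j|}$ caused by splitting off or enlarging a block (and with the re-sorting needed to maintain $0 \prec l_1 \prec \cdots \prec l_s$), adds up to exactly the coefficient $\nu'!/\prod_j k_j!(l_j!)^{|k_j|}$. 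This multiplicity count is the main obstacle; the rest is routine.

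An alternative that sidesteps part of this bookkeeping is to note that each side depends only on the order-$n$ Taylor polynomials of $f$ and $g$, so it suffices to check the identity for polynomials, and by linearity in $f$ for a single monomial $f(y) = y^\alpha$; the statement then becomes a purely algebraic identity for the $\nu$-th derivative of $\prod_{i=1}^d g_i^{\alpha_i}$, provable by the multinomial theorem together with the single-variable combinatorics of the $l_j$'s. Since the result is classical, one may also simply refer to \cite[Theorem 1.3.2]{primer}.
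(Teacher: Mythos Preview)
The paper does not actually give a proof of this theorem: it simply states the formula and refers the reader to \cite{faa} and to \cite[Theorem~1.3.2]{primer}. Your final sentence, citing \cite[Theorem~1.3.2]{primer}, is therefore exactly the paper's ``proof''. The inductive sketch you provide beforehand is additional material that the paper does not attempt; it is the standard route (and indeed the one in the cited references), but as you yourself note, the multiplicity bookkeeping in the inductive step is the genuine content and you have only outlined it rather than carried it out. If you intend this as a self-contained proof you would need to make that count precise; if you are content to cite the reference, the sketch can be dropped entirely and you match the paper.
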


This formula is the heart of all of the proofs of results on compositions of mild functions that will follow. In \cite{unif}, Cluckers, Pila and Wilkie use this formula to deduce that the composition of mild functions is mild. However, they show this by roughly estimating the sum and hence they obtain no explicit formulas for $A,B$ and $C$. In his paper \cite{ge}, Gevrey introduces `functions of class $\alpha$' (see below), later known as Gevrey functions, and showed that the functions of class $\alpha$ with $\alpha \geq 1$ are closed under composition. In fact, Gevrey proved his result only where one of the functions involved has only one variable. The proof uses the Fa\`a di Bruno formula in an essential way. Even though we refer to a more recent paper for the proof of this formula, the formula was already known for a long time. Because the result on Gevrey functions will immediately yield our result on mild functions and the technique is important for all other proofs, we will give a full proof of Gevrey's theorem for functions in arbitrary many variables, that is, the general version of what he proved.

\begin{definition}[Gevrey functions]\label{defgevrey}
Suppose that $\alpha \geq 0$ and $f: U \subset \R^d \to \R$. We say that $f$ is a Gevrey function of class $\alpha$ if it is $C^\infty$ and for all $\nu \in \N^d$ and $x \in U$ we have: $$\left| f^{(\nu)}(x) \right| \leq \left(M \frac{|\nu|!}{R^{|\nu|}}\right)^\alpha$$ for some $M,R > 0$. A map $f: U \subset \R^d \to \R^n$ is a Gevrey function of class $\alpha$ if all of its component functions are Gevrey functions of class $\alpha$.
\end{definition}

The following proposition, together with Theorem (\ref{thmgevrey}), motivates why we adjusted the definition of a mild function, as we explained below Definition (\ref{defmild}).

\begin{proposition}\label{mildgevrey}
Suppose that $f: U \subset \R^d \to \R$ is $C^\infty$, then $f$ is $(A,B,C)$-mild for some $A,B >0$ if and only if it is a Gevrey function of class $C+1$.
\end{proposition}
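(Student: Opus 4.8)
The plan is to prove the equivalence by directly matching the constants in the two bounds: both the mild condition and the Gevrey condition are pointwise bounds on $|f^{(\nu)}(x)|$ of exactly the same shape once one raises the Gevrey bound to the power $C+1$, so the proof reduces to exhibiting an explicit change of constants in each direction.

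First I would expand the Gevrey bound of class $C+1$: for constants $M,R>0$ it reads
\[
|f^{(\nu)}(x)| \leq \left(M\frac{|\nu|!}{R^{|\nu|}}\right)^{C+1} = M^{C+1}\left(R^{-(C+1)}\right)^{|\nu|}|\nu|!^{C+1},
\]
which is precisely the mild bound $B^{C+1}A^{|\nu|}|\nu|!^{C+1}$ with $B=M$ and $A=R^{-(C+1)}$. Hence a Gevrey function of class $C+1$ is $(A,B,C)$-mild with these choices. Conversely, assuming $f$ is $(A,B,C)$-mild, I would set $M=B$ and $R=A^{-1/(C+1)}$; these are well-defined positive reals because $A,B>0$ and $C+1\geq 1>0$. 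Substituting back gives $M^{C+1}|\nu|!^{C+1}R^{-(C+1)|\nu|}=B^{C+1}A^{|\nu|}|\nu|!^{C+1}$, so the mild bound is exactly the Gevrey bound of class $C+1$ with constants $M,R$, and $f$ is Gevrey of class $C+1$.

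There is essentially no obstacle: the statement is a bookkeeping identity relating the pair $(A,B)$ to the pair $(M,R)$ through $B=M$ and $A=R^{-(C+1)}$, and the only things to verify are that these transformations are well-defined (positivity of the constants and $C+1>0$) and mutually inverse. The content of the proposition — as the surrounding text stresses — is that with the adjusted definition of mildness this correspondence is this clean, which is what lets the constants in Theorem (\ref{thmgevrey}) be tracked explicitly; with Pila's original normalization one would instead have to absorb extra factors such as $d^{|\nu|}$ and $e^{|\nu|}$ coming from $\nu!\leq|\nu|!\leq d^{|\nu|}\nu!$ and $|\nu|!\leq|\nu|^{|\nu|}\leq e^{|\nu|}|\nu|!$, which still only perturbs $A$ and $B$ and leaves $C$ unchanged, but at the cost of a messier computation.
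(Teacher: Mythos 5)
Your proof is correct and matches the paper's argument exactly: the paper likewise sets $R = A^{-1/(C+1)}$ and $M = B$ and observes that the two bounds coincide. You have merely written out the substitution in both directions, which is a fine (if slightly more verbose) presentation of the same one-line computation.
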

\begin{proof}
If one sets $R = A^{-1/(C+1)}$ and $M = B$, then it follows immediately that the bounds on the derivatives are the same.
\end{proof}

Of course, one could define a Gevrey function of class $\alpha$ up to order $r$ and the result above has an up to order $r$ version \green{and thus, so do the theorem and the subsequent corollary below.}

\begin{theorem}[Gevrey \cite{ge}]\label{thmgevrey}
Suppose that $f:V\subset \R^d \to \R$ and $g: U \subset \R^e \to V$ are Gevrey functions of class $\alpha \geq 1$, then $f \circ g$ is a Gevrey function of class $\alpha$.
\end{theorem}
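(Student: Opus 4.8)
The plan is to apply the Fa\`a di Bruno formula (Theorem (\ref{thmfaa})) pointwise and to estimate each of its finitely many summands, the guiding observation being that once a power $\alpha$ is extracted from every summand, what remains is exactly a summand of the Fa\`a di Bruno expansion of the composition of two elementary rational functions, which is plainly a Gevrey function of class $1$. Concretely, fix $x \in U$ and $\nu \in \N^e$, put $n = |\nu|$, and let $M_f, R_f$ and $M_g, R_g$ be Gevrey data for $f$ and for $g = (g_1, \ldots, g_d)$. Applying Theorem (\ref{thmfaa}) at $x$, inserting the bounds $|f^{(\lambda)}(g(x))| \le (M_f |\lambda|!/R_f^{|\lambda|})^\alpha$ and $|g_i^{(l)}(x)| \le (M_g|l|!/R_g^{|l|})^\alpha$, and using the relations $\sum_i |k_i| = |\lambda|$ and $\sum_i |k_i|\,|l_i| = |\nu| = n$ valid on $p_s(\nu,\lambda)$, one checks that $|(f\circ g)^{(\nu)}(x)|$ is bounded by a finite sum of terms of the form $B^\alpha c$, where $c = \nu!/\prod_j k_j!(l_j!)^{|k_j|}$ is the Fa\`a di Bruno coefficient, a positive integer, and $B = M_f\, M_g^{|\lambda|}\,|\lambda|!\, R_f^{-|\lambda|}\, R_g^{-n} \prod_j (|l_j|!)^{|k_j|} \ge 0$.

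Next I would use the hypothesis $\alpha \ge 1$ twice: since $c \ge 1$ we have $c \le c^\alpha$, so each summand is at most $(Bc)^\alpha$; and since the summands are non-negative and finite in number, $\sum (B_t c_t)^\alpha \le \bigl(\sum B_t c_t\bigr)^\alpha$. Hence $|(f\circ g)^{(\nu)}(x)| \le \bigl(\sum_t B_t c_t\bigr)^\alpha$, the sum running over the same index set as in the Fa\`a di Bruno formula.

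The key step is to recognise $\sum_t B_t c_t$ as itself an instance of Theorem (\ref{thmfaa}). Put $\hat g_i(y) = M_g/\bigl(1 - (y_1 + \cdots + y_e)/R_g\bigr)$ for $i = 1, \ldots, d$ and $\hat f(z) = M_f/\bigl(1 - (z_1 + \cdots + z_d - dM_g)/R_f\bigr)$. These are analytic near $0$ and near $(M_g, \ldots, M_g) = \hat g(0)$ respectively, and by construction $\hat g_i^{(l)}(0) = M_g|l|!/R_g^{|l|}$ and $\hat f^{(\lambda)}(\hat g(0)) = M_f|\lambda|!/R_f^{|\lambda|}$, all quantities that occur being non-negative. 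Applying Theorem (\ref{thmfaa}) to $\hat f \circ \hat g$ at the point $0$ then gives precisely $(\hat f \circ \hat g)^{(\nu)}(0) = \sum_t B_t c_t$. Finally, $\hat f \circ \hat g$ is a rational function, indeed a function of $u = y_1 + \cdots + y_e$ alone, analytic on a ball about $0$; so Lemma (\ref{unitmild}) together with Proposition (\ref{mildgevrey}) (or a direct Cauchy estimate, which additionally yields explicit constants) provides $M, R > 0$, depending only on $M_f, R_f, M_g, R_g$ and $d$, with $|(\hat f \circ \hat g)^{(\nu)}(0)| \le M|\nu|!/R^{|\nu|}$ for all $\nu$. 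Combining the displays, $|(f\circ g)^{(\nu)}(x)| \le (M|\nu|!/R^{|\nu|})^\alpha$ for every $x \in U$ and every $\nu$; as $f \circ g$ is $C^\infty$, being a composition of $C^\infty$ maps, it is a Gevrey function of class $\alpha$.

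The only delicate point I anticipate is the bookkeeping in the first paragraph: the estimate must be arranged so that each summand comes out as exactly $B^\alpha c$, with the ``$\alpha = 1$'' quantity $B$ matching, term by term, the Fa\`a di Bruno expansion of the auxiliary composition. This dictates the precise form of $\hat f$ and $\hat g$ --- in particular the shift by $dM_g$, which places the expansion point of $\hat f$ at $\hat g(0)$ --- and it is the place where the normalisation adopted in Definitions (\ref{defmild}) and (\ref{defgevrey}), namely bounding $|f^{(\nu)}|$ by $(M|\nu|!/R^{|\nu|})^\alpha$, is genuinely used: only with it does a clean factor $\alpha$ come out of every term. The assumption $\alpha \ge 1$ enters exactly at the inequalities $c \le c^\alpha$ and $\sum a_t^\alpha \le (\sum a_t)^\alpha$, and the case $\alpha = 1$ of the theorem is essentially the content of the auxiliary computation.
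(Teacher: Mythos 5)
Your proof is correct and follows essentially the same route as the paper: the auxiliary functions $\hat f,\hat g$ are exactly the paper's $F,G$, the key step of recognising the bounding sum as the Fa\`a di Bruno expansion of $\hat f\circ\hat g$ at $0$ is identical, and $\alpha\ge 1$ enters in the same two places. The only (harmless) deviation is that you invoke analyticity/Cauchy estimates for $\hat f\circ\hat g$ where the paper computes $(F\circ G)^{(\nu)}(0)=M|\nu|!/R^{|\nu|}$ in closed form, which is what later yields the explicit constants of Corollary (\ref{compformula}).
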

\begin{proof}
The idea of this ingenious proof of Gevrey is as follows. For some specific $F$ and $G$, we can explicitly compute both sides of the Fa\`a di Bruno formula at some well chosen point since all derivatives of $F \circ G$ will be easy to compute. In turns out that the values of these derivatives at this point are exactly the bounds we have for the derivatives of our given maps $f$ and $g$. Now let $x \in U$ be arbitrary. Using the Fa\`a di Bruno formula and the triangle inequality to bound $|(f \circ g)^{(\nu)}(x)|$, we obtain the same sum of positive terms where the bounds on the derivatives of $f$ and $g$ occur. Moreover, this sum does not depend on $x$, since the bounds of a Gevrey function are uniform.

To define the maps $F$ and $G$, we need the constants that bound the derivatives of $f$ and $g$. Since they're Gevrey functions of class $\alpha$, we have:
\begin{equation*}
\left| f^{(\nu)}(x) \right| \leq \left(M_f \frac{|\nu|!}{R_f^{|\nu|}}\right)^\alpha
\end{equation*}
and
\begin{equation*}
\left| g_{i}^{(\nu)}(x) \right| \leq \left(M_g \frac{|\nu|!}{R_g^{|\nu|}}\right)^\alpha
\end{equation*}
for $i = 1,\ldots,d$. The maps that do the trick are the following $F$ and $G = (G_1,\ldots,G_d)$:
\begin{equation*}
F = \frac{M_fR_f}{(R_f+dM_g) - (x_1+\ldots+x_d)}
\end{equation*}
and for $i = 1,\ldots,d$:
\begin{equation*}
G_i = \frac{M_gR_g}{R_g - (x_1+\ldots+x_e)}.
\end{equation*}
We first compute an arbitrary derivative of $G_i$ for some $i \in \{1,\ldots,d\}$. Set $S(x) = x_1+\ldots + x_e$. Then we have for all $j \in \{1,\ldots,e\}$ that $(\partial/\partial x_j)(S(x)) = 1$. Writing $G_i(x) = M_gR_g(R_g-S(x))^{-1}$ we easily obtain that for any $\lambda \in \N^e$: $$G_i^{(\lambda)}(x) = (M_gR_g)|\lambda|!(R_g-S(x))^{-(1+|\lambda|)}.$$ Hence we get:
\begin{equation}\label{gvalue}
G_i^{(\lambda)}(0) = M_g \frac{|\lambda|!}{R_g^{|\lambda|}}.
\end{equation}
Noticing that $G(0) = (M_g,\ldots,M_g)$, we see that for all $\lambda \in \N^d$ we obtain similarly:
\begin{equation}\label{fvalue}
F^{(\lambda)}(G(0)) = M_f\frac{|\lambda|!}{R_f^{|\lambda|}}.
\end{equation}
Next, one checks that: $$(F \circ G)(x) = \frac{M_fR_f(R_g-S(x))}{R_fR_g - (R_f+dM_g)S(x)}$$ and that for any $\lambda \in \N^e$ with $|\lambda| \geq 1$: 
\begin{equation*}
(F \circ G)^{(\lambda)}(x) = \frac{dM_fM_gR_fR_g}{R_f+dM_g}\frac{(R_f+dM_g)^{|\lambda|}|\lambda|!}{(R_fR_g - (R_f+dM_g)S(x))^{|\lambda|+1}}.
\end{equation*}
Finally we get that: 
\begin{equation}\label{fgvalue}
(F \circ G)^{(\lambda)}(0) = M\frac{|\lambda|!}{R^{|\lambda|}}
\end{equation}
with $M = \frac{dM_fM_g}{R_f+dM_g}$ and $R = \frac{R_fR_g}{R_f+dM_g}$. Hence if we plug in values as in (\ref{fvalue}) and (\ref{gvalue}) in the sum of Theorem (\ref{thmfaa}), then we know that it is equal to (\ref{fgvalue}). More precisely, we get the following equalities:
\green{\[
M\frac{|\nu|!}{R^{|\nu|}} = (F \circ G)^{(\nu)}(0) = \sum_{1 \leq |\lambda| \leq n} M_f\frac{|\lambda|!}{R_f^{|\lambda|}} \sum_{s = 1}^n \sum_{p_s(\nu,\lambda)} \nu! \prod_{j = 1}^s \frac{\left(M_g \frac{|l_j|!}{R_g^{|l_j|}}\right)^{|k_j|}}{k_j!(l_j!)^{|k_j|}}.
\]}
Now first suppose $\alpha = 1$. To compute an upper bound on $|(f \circ g)^{(\nu)}(x)|$, use the Fa\`a di Bruno formula, the triangle inequality and finally the bounds on the derivatives of $f$ and $g$ to obtain the sum on the right hand side of these equations. We now know it equals the desired form on the left hand side. This finishes the proof if $\alpha = 1$. If $\alpha > 1$, you additionally use that $r^\alpha + s^\alpha \leq (r+s)^\alpha$ $(r,s \geq 0)$, which yields the result.
\end{proof}

Combining this with Proposition (\ref{mildgevrey}) one deduces from this proof the following result on mild functions.

\begin{corollary}\label{compformula}
Suppose that $f: V \subset \R^d \to \R$ is $(A_f,B_f,C)$-mild and that $g: U \subset \R^e \to V$ is $(A_g,B_g,C)$-mild. Then $f\circ g$ is $(A,B,C)$-mild where:
\begin{align*}
A &= A_fA_g(A_f^{-1/(C+1)}+dB_g)^{C+1}, \\
B &= \frac{dB_fB_g}{A_f^{-1/(C+1)}+dB_g} < B_f.
\end{align*}
In particular, if $f$ is $(A_f,0)$-mild and $g$ is $(A_g,0)$-mild then $f \circ g$ is $(A,0)$-mild with:
\begin{equation*}
A = A_g(dA_f +1).
\end{equation*}
\end{corollary}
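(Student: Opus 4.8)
The plan is to reduce the statement to Gevrey functions and simply read off the constants from the proof of Theorem (\ref{thmgevrey}). First I would apply Proposition (\ref{mildgevrey}) to translate the hypotheses: since $f$ is $(A_f,B_f,C)$-mild it is a Gevrey function of class $\alpha := C+1$ with $M_f = B_f$ and $R_f = A_f^{-1/(C+1)}$, and likewise $g$ is a Gevrey function of class $\alpha$ with $M_g = B_g$ and $R_g = A_g^{-1/(C+1)}$. Note $\alpha \geq 1$ because $C \geq 0$, so Theorem (\ref{thmgevrey}) applies.

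Next I would invoke the \emph{proof} of Theorem (\ref{thmgevrey}) rather than merely its statement: the explicit model functions $F$ and $G$ used there show, via the Fa\`a di Bruno formula and the triangle inequality, that $f \circ g$ is a Gevrey function of class $\alpha$ with the specific constants
\[
M = \frac{dM_fM_g}{R_f+dM_g}, \qquad R = \frac{R_fR_g}{R_f+dM_g}
\]
appearing in equation (\ref{fgvalue}). Then I would translate back with the converse direction of Proposition (\ref{mildgevrey}): $f\circ g$ is $(A,B,C)$-mild with $B = M$ and $A = R^{-(C+1)}$. Substituting $R_f^{C+1} = A_f^{-1}$ and $R_g^{C+1} = A_g^{-1}$ gives
\[
A = \frac{(R_f+dM_g)^{C+1}}{R_f^{C+1}R_g^{C+1}} = A_fA_g\bigl(A_f^{-1/(C+1)}+dB_g\bigr)^{C+1}, \qquad B = \frac{dB_fB_g}{A_f^{-1/(C+1)}+dB_g},
\]
and the strict inequality $B < B_f$ is immediate since $R_f = A_f^{-1/(C+1)} > 0$ forces $\tfrac{dB_g}{R_f+dB_g} < 1$.

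For the \emph{in particular} clause, observe that an $(A_f,0)$-mild map is in particular $(A_f,1,0)$-mild (and similarly for $g$), so we may take $B_f = B_g = 1$ and $C = 0$ in the formulas just obtained. Then the expression for $A$ collapses to $A_fA_g(A_f^{-1}+d) = A_g + dA_fA_g = A_g(dA_f+1)$, while $B = \tfrac{d}{A_f^{-1}+d} < 1$, so $f\circ g$ is $(A,0)$-mild with $A = A_g(dA_f+1)$, as claimed.

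I do not expect a genuine obstacle here: the content is entirely in the proof of Theorem (\ref{thmgevrey}), and what remains is the bookkeeping of carrying the substitutions $M_f = B_f$, $R_f = A_f^{-1/(C+1)}$ (and the analogues for $g$) through the formulas for $M$ and $R$. The one point to keep an eye on is that $A_f < \infty$, i.e. $R_f > 0$, which is needed both for the substitution to be meaningful and for the strict inequality $B < B_f$; this is automatic from the definition of a mild function.
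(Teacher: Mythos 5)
Your proposal is correct and follows exactly the route the paper intends: the paper's entire justification for this corollary is the single sentence "Combining this with Proposition (\ref{mildgevrey}) one deduces from this proof the following result on mild functions," and your bookkeeping with $M_f = B_f$, $R_f = A_f^{-1/(C+1)}$ (and the analogues for $g$) plugged into the constants $M$ and $R$ from the proof of Theorem (\ref{thmgevrey}) is precisely the omitted computation. The verification of the special case $C=0$, $B_f=B_g=1$ and of the strict inequality $B<B_f$ is also correct.
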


\subsection{Weakly mild functions and power substitutions}
Many nice functions are not mild. For instance, if $f$ is analytic on $U$, this still isn't sufficient: consider for example the function $x \mapsto x^{1/2}$ on $(0,1)$. So one can not weaken the conditions of Lemma (\ref{unitmild}). Maps of this form will be crucial later on. \green{They are also examples} of the next definition, a variant on mild functions that has been introduced by Cluckers, Pila and Wilkie in \cite{unif}. By a reparameterization, which we will call a power substitution, we can make these functions mild up to any order if they satisfy an additional condition. 

\begin{definition}[Weakly mild functions]
Suppose that $A,B>0$, $C \geq 0$ are real numbers, $r>0$ an integer or $+\infty$ and $f: U \subset (0,1)^d \to \R$. Then $f$ is called weakly $(A,B,C)$-mild up to order $r$ if it is $C^r$ and if for all $\nu \in \N^d$ with $|\nu| \leq r$ and $x \in U$: 
\begin{equation*}
\left| f^{(\nu)}(x)\right| \leq \frac{B^{C+1}A^{|\nu|}|\nu|!^{C+1}}{x^{\nu}}.
\end{equation*}
A map $f = (f_1,\ldots,f_n): U \subset (0,1)^d \to \R^n$ is weakly $(A,B,C)$-mild up to order $r$ if all component functions $f_1,\ldots,f_n$ are weakly $(A,B,C)$-mild up to order $r$. If $B \leq 1$, we simply say that $f$ is weakly $(A,C)$-mild up to order $r$ and if we say that $f$ is weakly mild up to order $r$, then we mean that $f$ is weakly $(A,C)$-mild up to order $r$ for some $A>0$ and $C \geq 0$. Finally, if $r = +\infty$, we might just say weakly $(A,B,C)$-mild or weakly mild.
\end{definition}

Using the technique of the proof of Theorem (\ref{thmgevrey}), we can make the result of \cite[Proposition 4.1.5]{unif} more explicit in the proposition below. In particular we have an explicit formula for $\tilde{A}$ and the constant $C$ is preserved.

\begin{proposition}[Power substitution]\label{powersub}
Let \green{$A \geq 1$}, $B>0$ and $C \geq 0$ be real numbers and suppose that $f: U \subset(0,1)^d \to \R$ is a map such that $f$ and all first order derivatives of $f$ are weakly $(A,B,C)$-mild. Let $r>0$ be an integer and define $\phi: (0,1)^d \to (0,1)^d$ by: $$(x_1,\ldots,x_d) \mapsto (x_1^{n_1},\ldots,x_d^{n_d})$$ with $n_i \geq r$ for all $i \in \{1,\ldots,d\}$, and denote $V = \phi^{-1}(U)$. Then the map $f \circ \phi: V \to \R$ is $(\tilde{A},B,C)$-mild up to order $r$ with $\tilde{A} = NA(d+1)^{C+1}$, with $N = \max(n_1,\ldots,n_d)$.
\end{proposition}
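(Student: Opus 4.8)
The plan is to apply the multivariate Faà di Bruno formula (Theorem~\ref{thmfaa}) to $f\circ\phi$ and then exploit two special features: that $\phi$ is a \emph{diagonal} map, and that $f$ together with all its first partials is weakly mild. Fix a multi-index $\mu$ with $|\mu|=k\le r$; the case $\mu=0$ is immediate, since weak $(A,B,C)$-mildness of $f$ already gives $|f(\phi(x))|\le B^{C+1}$. Because each $\phi_i$ depends only on $x_i$, the derivative $\phi^{(l)}$ vanishes unless $l=\ell e_i$ for some $i$, in which case $\phi_i^{(\ell e_i)}(x)=\frac{n_i!}{(n_i-\ell)!}x_i^{n_i-\ell}$ (and $\ell\le k\le r\le n_i$, so this is well defined). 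Hence in Theorem~\ref{thmfaa} only the terms in which every $k_j$ and $l_j$ is supported on one common coordinate $c(j)$ survive; writing $\kappa_j=|k_j|$ and $\ell_j=|l_j|$, such a term contributes $f^{(\lambda)}(\phi(x))$ times $\mu!\prod_j\frac{1}{\kappa_j!}\binom{n_{c(j)}}{\ell_j}^{\kappa_j}x_{c(j)}^{\kappa_j(n_{c(j)}-\ell_j)}$, the total exponent of $x_i$ in this product being $n_i\lambda_i-\mu_i$; moreover, as $\lambda_i=\sum_{c(j)=i}\kappa_j$ and $\mu_i=\sum_{c(j)=i}\kappa_j\ell_j$, one has $\operatorname{supp}(\mu)\subseteq\operatorname{supp}(\lambda)$ on every surviving term.

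The heart of the argument is to make these positive powers of $x$ absorb the singular factor coming from $f$. For any $i\in\operatorname{supp}(\mu)$ we have $\lambda_i\ge 1$, so $f^{(\lambda)}=(\partial_i f)^{(\lambda-e_i)}$, and weak $(A,B,C)$-mildness of $\partial_i f$ gives $|f^{(\lambda)}(\phi(x))|\le B^{C+1}A^{|\lambda|-1}(|\lambda|-1)!^{C+1}\phi(x)^{-(\lambda-e_i)}$. I would then take the \emph{weighted geometric mean} of these bounds over $i\in\operatorname{supp}(\mu)$, with weights $w_i\ge \mu_i/n_i$ chosen so that $\sum_i w_i=1$; such weights exist precisely because $\sum_i \mu_i/n_i\le |\mu|/r=k/r\le 1$, and this is the only place where the hypothesis $n_i\ge r$ is used. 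Since a weighted geometric mean of upper bounds of a nonnegative quantity is again an upper bound, this yields $|f^{(\lambda)}(\phi(x))|\le B^{C+1}A^{|\lambda|-1}(|\lambda|-1)!^{C+1}\prod_i x_i^{-n_i\lambda_i+n_i w_i}$; multiplying by the factor $\prod_i x_i^{n_i\lambda_i-\mu_i}$ of the $\phi$-product leaves $\prod_i x_i^{n_i w_i-\mu_i}$, which is $\le 1$ on $(0,1)^d$ because every exponent is $\ge 0$. Consequently, for every $x\in V$, each term of the Faà di Bruno sum is dominated, uniformly in $x$, by $B^{C+1}A^{|\lambda|-1}(|\lambda|-1)!^{C+1}\,\mu!\prod_j\frac{1}{\kappa_j!}\binom{n_{c(j)}}{\ell_j}^{\kappa_j}$.

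It then remains to sum over all terms and recognise the total as the composition bound of Corollary~\ref{compformula}. Using $A\ge 1$ I would bound $A^{|\lambda|-1}(|\lambda|-1)!^{C+1}\le A^{|\lambda|}|\lambda|!^{C+1}$, and with $N=\max_i n_i$ bound $\binom{n_{c(j)}}{\ell_j}\le N^{\ell_j}\le N^{\ell_j}(\ell_j!)^{C}$. After these substitutions the sum coincides with (or is a subsum of, hence is bounded by) the expression one obtains by applying Theorem~\ref{thmfaa}, the triangle inequality, and the derivative estimates to an arbitrary $(A,B,C)$-mild $\tilde f$ of $d$ variables composed with an arbitrary $(N,1,C)$-mild $\psi$. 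But that expression was computed in the proof of Theorem~\ref{thmgevrey}: it equals $\mathcal B^{C+1}\mathcal A^{|\mu|}|\mu|!^{C+1}$ with $\mathcal A=AN\bigl(A^{-1/(C+1)}+d\bigr)^{C+1}$ and $\mathcal B=\tfrac{dB}{A^{-1/(C+1)}+d}\le B$. Since $A\ge 1$ gives $A^{-1/(C+1)}\le 1$, we get $\mathcal A\le AN(d+1)^{C+1}=\tilde A$, so $|(f\circ\phi)^{(\mu)}(x)|\le B^{C+1}\tilde A^{|\mu|}|\mu|!^{C+1}$ for all $x\in V$ and all $|\mu|\le r$, i.e.\ $f\circ\phi$ is $(\tilde A,B,C)$-mild up to order $r$.

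I expect the second paragraph to be the genuine obstacle. A single substitution $f^{(\lambda)}=(\partial_i f)^{(\lambda-e_i)}$ furnishes only $n_i$ extra powers of $x_i$, and thus ``repairs'' just one coordinate; the point is that the weighted geometric mean of the $|\operatorname{supp}(\mu)|$ single substitutions repairs all coordinates at once, and that the weights one needs exist exactly because $n_i\ge r\ge|\mu|$. By contrast, verifying the diagonal structure of the surviving Faà di Bruno terms and massaging the final sum into the precise shape required by Corollary~\ref{compformula} is routine bookkeeping, although the inequalities $\binom{n}{\ell}\le N^{\ell}(\ell!)^{C}$ and $A^{-1/(C+1)}\le 1$ are exactly what fix the stated constant $\tilde A=NA(d+1)^{C+1}$.
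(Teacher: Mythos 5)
Your proof is correct, and its skeleton --- Fa\`a di Bruno, absorb the singular factor $\phi(x)^{-\lambda'}$ coming from $f^{(\lambda)}$ into the positive powers of $x$ produced by the derivatives of $\phi$, then recognise what remains as the composition bound for an $(A,B,C)$-mild function with an $(N,1,C)$-mild one and invoke Corollary (\ref{compformula}) --- is the same as the paper's. What you do genuinely differently is the absorption step, which is indeed the crux. The paper makes a \emph{single} substitution $f^{(\lambda)}=(f^{(e_k)})^{(\lambda-e_k)}$ for the one coordinate $k$ minimising $x_i$ over $\{i:\lambda_i\neq 0\}$, then pushes every negative exponent onto $x_k$ via $x_i^{-a}\le x_k^{-a}$ and $k_{j,i}\le|k_j|$, ending with $x_k^{\,n_k-|\nu|}\le 1$; you instead average the $|\operatorname{supp}(\mu)|$ single substitutions by a weighted geometric mean with weights $w_i\ge\mu_i/n_i$, $\sum_i w_i=1$ (possible exactly because $\sum_i\mu_i/n_i\le|\mu|/r\le 1$), which repairs all coordinates at once and leaves $\prod_i x_i^{\,n_iw_i-\mu_i}\le 1$. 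Both arguments spend the hypothesis $n_i\ge r\ge|\nu|$ at the same moment; yours is coordinate-symmetric and dispenses with the choice of a minimal coordinate and the inequality $k_{j,i}\le|k_j|$, at the price of the (easily checked) observation that $\operatorname{supp}(\mu)\subseteq\operatorname{supp}(\lambda)$ on the surviving terms. Your explicit use of the diagonal structure of $\phi$ --- only terms with $k_j$ and $l_j$ supported on a common coordinate survive --- is also a little cleaner than the paper's, which bounds all terms including the vanishing ones. The final bookkeeping ($A\ge 1$ to pass from $A^{|\lambda|-1}(|\lambda|-1)!^{C+1}$ to $A^{|\lambda|}|\lambda|!^{C+1}$, $\binom{n}{\ell}\le N^{\ell}(\ell!)^{C}$, and $A^{-1/(C+1)}\le 1$) matches the paper's and produces exactly the stated $\tilde A=NA(d+1)^{C+1}$.
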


\green{\emph{Remark: if $f$ satisfies the conditions above, but with $0<A<1$ instead, one can apply the proposition after enlarging $A$ to $1$.}}

\begin{proof}
We first take a look at the derivatives of $\phi$. Take any component function $\phi_i$ of $\phi$. Because it only depends on $x_i$, we just have to bound the following derivatives:
\[
 \left(\frac{\partial}{\partial x_i}\right)^k\phi_i (x) = n_i\cdots(n_i-(k+1))x_i^{n_i-k} \leq N^k x_i^{n_i-k}\leq N^k.
 \]
Thus, $\phi$ is $(N,0)$-mild. However, we will use the sharper bound (involving the power $n_i-k$ of $x_i$) to deal with the derivatives of $f$. By Theorem (\ref{thmfaa}) and the triangle inequality, we have for every $x \in V$ that
\[
|(f \circ \phi)^{(\nu)}(x)| \leq \sum_{1 \leq |\lambda| \leq n} |f^{(\lambda)}(\phi(x))| \sum_{s = 1}^n \sum_{p_s(\nu,\lambda)} \nu! \prod_{j = 1}^s \frac{|(\phi^{(l_j)}(x))^{k_j}|}{k_j!(l_j!)^{|k_j|}}.
\]

Consider one term in this sum, so we also have a fixed $\lambda \in \N^d$ with $1 \leq |\lambda| \leq |\nu|$, \green{$s \in \{1,\ldots,n\}$} and  $(k_1,\ldots,k_s ,l_1,\ldots,l_s) \in p_s(\nu,\lambda)$. Then, for any $i \in \{1,\ldots,d\}$ and $j \in \{1,\ldots,s\}$, we have:
\begin{equation*}
\left|  \left(\phi_i^{(l_j)}(x)\right)^{k_{j,i}} \right| \leq N^{l_{j,i}k_{j,i}}x_i^{(n_i-l_{j,i})k_{j,i}}
\end{equation*}
and thus the power of $x_i$ coming from the product over $j$ is $\sum_{j = 1}^s (n_i-l_{j,i})k_{j,i}$. 

We now compute the `negative' contribution to the powers of $x_i$ in this term coming from $f^{(\lambda)}(\phi(x))$. Write $\lambda = \lambda' + \beta$ for some $\beta \in \N^d$ with $|\beta| = 1$ \green{and let $k \in \{1,\ldots,d\}$ be the unique index such that $\beta_{k}=1$}. For any choice of $\beta$ we have the following (thus we may pick some particular $\beta$, \green{equivalently, make a choice for $k$}, later):
\[
\left|f^{(\lambda)}(\phi(x)) \right| = \left| (f^{(\beta)})^{(\lambda')}(\phi(x)) \right| \leq B^{C+1}A^{|\lambda'|}|\lambda'|!^{C+1}\frac{1}{\phi(x)^{\lambda'}} \leq B^{C+1}A^{|\lambda|}|\lambda|!^{C+1}\frac{1}{\phi(x)^{\lambda'}}.
\]
Hence the power of $x_i$ coming from $f^{(\lambda)}(\phi(x))$ in the term is equal to $-n_i\lambda'_i$. We can now compute the total power of $x_i$ occurring in the term. The total power of $x_i$ when $i \neq k$ is (then $\lambda_i' = \lambda_i$):
\[
\sum_{j = 1}^s \left((n_i-l_{j,i})k_{j,i}\right) - n_i\lambda_i = \sum_{j = 1}^s \left((n_i-l_{j,i})k_{j,i}\right) - n_i\sum_{j = 1}^s k_{j,i} = -\sum_{j = 1}^s k_{j,i}l_{j,i}.
\]
When $i = k$ (then $\lambda'_k = \lambda_k-1$) one computes in the same way that the power of $x_k$ is $n_k - \sum_{j = 1}^s k_{j,k}l_{j,k}$. We now pick $k$ such that $$x_k = \min_{i: \lambda_i \neq 0} x_i.$$ Then we have for all $i \in \{1,\ldots,d\}$: $$x_i^{-\sum_{j = 1}^sk_{j,i}l_{j,i}} \leq \green{x_k^{-\sum_{j = 1}^s k_{j,i}l_{j,i}}} \leq x_k^{-\sum_{j = 1}^s |k_j|l_{j,i}}$$ and we can bound the product of all $x_i$ and their powers by:
\begin{equation*}
x_k^{n_k - \sum_{i = 1}^d \sum_{j = 1}^s |k_j|l_{j,i}} = x_k^{n_k - \sum_{i = 1}^d \nu_i} = x_k^{n_k - |\nu|} \leq 1
\end{equation*}
since $n_k \geq r$ and $|\nu| \leq r$. \green{So we can bound this term independent of $x$.}

\green{Putting everything together we obtain that:
\begin{align*}
|(f \circ \phi)^{(\nu)}(x)| &\leq \sum_{1 \leq |\lambda| \leq n} B^{C+1}A^{|\lambda|} |\lambda|!^{C+1} \sum_{s = 1}^n \sum_{p_s(\nu,\lambda)} \nu! \prod_{j = 1}^s \frac{N^{|l_{j}||k_{j}|}}{k_j!(l_j!)^{|k_j|}} \\
&\leq \sum_{1 \leq |\lambda| \leq n} B^{C+1}A^{|\lambda|} |\lambda|!^{C+1} \sum_{s = 1}^n \sum_{p_s(\nu,\lambda)} \nu! \prod_{j = 1}^s \frac{(N^{|l_{j}|} |l_{j}|!^{C+1})^{|k_{j}|}}{k_j!(l_j!)^{|k_j|}}.
\end{align*}
This is the same upper bound as the one that one would find for the composition of an $(A,B,C)$-mild function up to order $r$ and an $(N,1,C)$-mild function up to order $r$ using Proposition (\ref{mildgevrey}) and then the proof of Theorem (\ref{thmgevrey}). Therefore one can use the formula of} Corollary (\ref{compformula}) to find the expression for $\tilde{A}$.
\end{proof}

The extra condition that all first order derivatives are weakly mild is crucial and cannot be omitted.  For instance, consider again the map $x \mapsto x^{1/2}$. Then we see that by composing with the power map $x \mapsto x^3$, we do not obtain a map that is mild up to order $3$. Of course, using the power map $x \mapsto x^2$, it becomes mild (up to order $+\infty$). This observation will be the key to slightly improve the main theorem in Section \ref{main section}.
 
Even though we can say something about the derivatives of a composition of weakly mild functions, it is not necessarily weakly mild. We do have the following result.

\begin{proposition}\label{thmweakmild}
Suppose that $f: V \subset \R^d \to \R$ is $(A_f,B_f,C)$-mild up to order $r$ and that $g: U \subset (0,1)^e \to V$ is weakly $(A_g,B_g,C)$-mild up to order $r$. Then $f \circ g$ is weakly $(A,B,C)$-mild up to order $r$, where $A$ and $B$ are as in Corollary (\ref{compformula}).
\end{proposition}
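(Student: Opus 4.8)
The plan is to mimic the proof of Theorem (\ref{thmgevrey}), borrowing the bookkeeping of powers of $x$ from the proof of Proposition (\ref{powersub}): apply the Fa\`a di Bruno formula and the triangle inequality, pull a single factor $x^{-\nu}$ out of the resulting sum, and then observe that the remaining $x$-independent sum is exactly the one already estimated for ordinary mild functions (in its up to order $r$ version).

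In detail, fix $\nu \in \N^e$ with $n := |\nu| \leq r$ and fix $x \in U$. Since $f$ and $g$ are $C^n$, Theorem (\ref{thmfaa}) and the triangle inequality give
\[
|(f \circ g)^{(\nu)}(x)| \leq \sum_{1 \leq |\lambda| \leq n} |f^{(\lambda)}(g(x))| \sum_{s = 1}^n \sum_{p_s(\nu,\lambda)} \nu! \prod_{j = 1}^s \frac{|(g^{(l_j)}(x))^{k_j}|}{k_j!(l_j!)^{|k_j|}}.
\]
For every tuple in $p_s(\nu,\lambda)$ one has $1 \leq |l_j| \leq |\nu| \leq r$ (because $\sum_j |k_j| l_j = \nu$ with all $|k_j| \geq 1$) and $|\lambda| \leq |\nu| \leq r$ (because $\sum_j k_j = \lambda$), so the defining inequalities of $f$ and of each component $g_i$ are available for all derivatives occurring above. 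Writing $(g^{(l_j)})^{k_j} = \prod_{i=1}^d (g_i^{(l_j)})^{k_{j,i}}$ and using weak $(A_g,B_g,C)$-mildness of the $g_i$,
\[
\left| (g^{(l_j)}(x))^{k_j} \right| \leq \frac{\big(B_g^{C+1} A_g^{|l_j|} |l_j|!^{C+1}\big)^{|k_j|}}{x^{|k_j|\, l_j}},
\]
so that, by the constraint $\sum_{j=1}^s |k_j| l_j = \nu$, the product of these denominators over $j = 1,\ldots,s$ is exactly $x^{\nu}$, independently of the term. Combined with $|f^{(\lambda)}(g(x))| \leq B_f^{C+1} A_f^{|\lambda|} |\lambda|!^{C+1}$ this yields
\[
|(f \circ g)^{(\nu)}(x)| \leq \frac{1}{x^\nu}\sum_{1 \leq |\lambda| \leq n} B_f^{C+1} A_f^{|\lambda|}|\lambda|!^{C+1} \sum_{s = 1}^n \sum_{p_s(\nu,\lambda)} \nu! \prod_{j = 1}^s \frac{\big(B_g^{C+1} A_g^{|l_j|}|l_j|!^{C+1}\big)^{|k_j|}}{k_j!(l_j!)^{|k_j|}}.
\]

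To finish, I would note that the sum multiplying $x^{-\nu}$ on the right is word for word the bound produced by Fa\`a di Bruno and the triangle inequality for the composition of an ordinary $(A_f,B_f,C)$-mild function with an ordinary $(A_g,B_g,C)$-mild function, i.e. of Gevrey functions of class $C+1$ via Proposition (\ref{mildgevrey}); by the computation carried out in the proof of Theorem (\ref{thmgevrey}) it is at most $B^{C+1}A^{|\nu|}|\nu|!^{C+1}$, with $A$ and $B$ the constants of Corollary (\ref{compformula}) — this is where the hypothesis $C \geq 0$, i.e. $C+1 \geq 1$, enters, through $s^{C+1}+t^{C+1} \leq (s+t)^{C+1}$. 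Dividing by $x^\nu$ gives exactly the weak $(A,B,C)$-mildness bound up to order $r$, as claimed. The only place that requires care — and the sole content beyond transcribing earlier proofs — is the bookkeeping in the displayed estimate for $(g^{(l_j)}(x))^{k_j}$: one must verify that the negative powers of the coordinates $x_i$ assemble into precisely $x^{-\nu}$ and that every multi-index appearing has length at most $r$, so that the hypotheses on $f$ and $g$ genuinely apply.
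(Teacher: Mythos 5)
Your proposal is correct and is exactly the argument the paper intends: the paper's own proof is a one-line remark that one applies Fa\`a di Bruno as in Proposition (\ref{powersub}) and checks that each term contributes precisely a factor $1/x^{\nu}$, which you verify in detail via the constraint $\sum_{j}|k_j|l_j=\nu$, leaving the $x$-independent sum already bounded in the proof of Theorem (\ref{thmgevrey}). The bookkeeping (all $|\lambda|,|l_j|\leq|\nu|\leq r$, so the up-to-order-$r$ hypotheses suffice) is also handled correctly.
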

\begin{proof}
The strategy of the proof is completely the same as in Proposition (\ref{powersub}): one just checks that in each term one obtains exactly a factor $\frac{1}{x^{\nu}}$ when bounding $(f \circ g)^{(\nu)}(x)$ using Theorem (\ref{thmfaa}).
\end{proof}

We conclude this section with a result on the product of (weakly) mild functions. In this proposition, one can replace $(0,1)^d$ by $\R^d$ in the case of mild functions.

\begin{proposition}\label{product}
Suppose that $f_1,\ldots,f_l: U \subset (0,1)^d \to \R$ are (weakly) $(A,B,C)$-mild up to order $r$ for some $A,B > 0$ and $C \geq 0$. Then the product $f_1 \cdots f_l$ is (weakly) $(lA,B^l,C)$-mild up to order $r$.
\end{proposition}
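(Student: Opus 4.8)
The plan is to reduce everything to the general Leibniz rule together with two elementary combinatorial estimates, in complete parallel to how the earlier results in this section are proved via Fa\`a di Bruno. First I would record the multivariate product rule: if $h_1,\ldots,h_l\colon U\subset\R^d\to\R$ are $C^r$ and $\nu\in\N^d$ with $|\nu|\le r$, then
\[
(h_1\cdots h_l)^{(\nu)}=\sum_{\nu_1+\cdots+\nu_l=\nu}\frac{\nu!}{\nu_1!\cdots\nu_l!}\prod_{i=1}^l h_i^{(\nu_i)},
\]
which follows by an immediate induction on $l$ from the two-factor rule; this also shows that $f_1\cdots f_l$ is $C^r$. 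Note that in each term $\nu_i\le\nu$ componentwise, hence $|\nu_i|\le|\nu|\le r$, so the defining inequalities for the $f_i$ (which only hold up to order $r$) may legitimately be applied to every factor $f_i^{(\nu_i)}$, and moreover $\sum_i|\nu_i|=|\nu|$.

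For the mild case I would apply the triangle inequality and the bound $|f_i^{(\nu_i)}(x)|\le B^{C+1}A^{|\nu_i|}|\nu_i|!^{C+1}$ to each factor, obtaining
\[
|(f_1\cdots f_l)^{(\nu)}(x)|\le B^{l(C+1)}A^{|\nu|}\sum_{\nu_1+\cdots+\nu_l=\nu}\frac{\nu!}{\nu_1!\cdots\nu_l!}\Bigl(\prod_{i=1}^l|\nu_i|!\Bigr)^{C+1}.
\]
Now I would invoke the two estimates: (i) $\prod_{i=1}^l|\nu_i|!\le|\nu|!$, since $|\nu|!/(|\nu_1|!\cdots|\nu_l|!)$ is a positive integer; and (ii) $\sum_{\nu_1+\cdots+\nu_l=\nu}\nu!/(\nu_1!\cdots\nu_l!)=l^{|\nu|}$, which is the multinomial theorem applied coordinatewise with all variables set equal to $1$. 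Combining (i) and (ii) gives $|(f_1\cdots f_l)^{(\nu)}(x)|\le (B^l)^{C+1}(lA)^{|\nu|}|\nu|!^{C+1}$, i.e.\ $f_1\cdots f_l$ is $(lA,B^l,C)$-mild up to order $r$.

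For the weakly mild case the argument is identical, except that each factor now carries an extra $1/x^{\nu_i}$; since $\prod_i x^{-\nu_i}=x^{-\sum_i\nu_i}=x^{-\nu}$ is the same in every term of the Leibniz sum, it factors out of the entire estimate and yields precisely the weakly mild bound with the same constants $lA$ and $B^l$. (In the mild case the domain is irrelevant, so one may replace $(0,1)^d$ by $\R^d$ there.) I do not expect a genuine obstacle: the whole proof is a short computation, and the only points requiring care are the bookkeeping in the two estimates (i) and (ii), both of which are standard.
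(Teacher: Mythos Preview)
Your proof is correct and is essentially the same as the paper's: both apply the general Leibniz rule, bound $\prod_i|\nu_i|!\le|\nu|!$, and use $\sum_{\nu_1+\cdots+\nu_l=\nu}\nu!/(\nu_1!\cdots\nu_l!)\le l^{|\nu|}$ to conclude. The only difference is cosmetic: the paper writes the Leibniz coefficients abstractly as $\mathrm{Ch}(\nu_1,\ldots,\nu_l)$ and cites \cite{unif} for the bound $\sum\mathrm{Ch}\le l^{|\nu|}$, whereas you identify these coefficients explicitly and obtain the equality $l^{|\nu|}$ directly from the multinomial theorem, making your version slightly more self-contained.
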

\begin{proof}
As in \cite[Proposition 2.6]{milddef2}, we have that:
\[
|(f_1 \cdots f_l)^{(\nu)}(x)| \leq \sum_{\nu_1+\ldots +\nu_l = \nu} \text{Ch}(\nu_1,\ldots,\nu_l)\prod_{i = 1}^l |f_i^{(\nu_i)}(x)|,
\]
where the sum runs over all $\nu_1,\ldots,\nu_l \in \N^d$ such that $\nu_1+\ldots+\nu_l = \nu$ and $\text{Ch}(\nu_1,\ldots,\nu_l)$ is a constant depending on $\nu_1,\ldots,\nu_l$. If $f_1,\ldots,f_l$ are $(A,B,C)$-mild, we find:
\[
\prod_{i = 1}^l |f_i^{(\nu_i)}(x)| \leq \prod_{i = 1}^l B^{C+1}A^{|\nu_i|}|\nu_i|!^{C+1} \leq B^{l(C+1)} A^{|\nu|} |\nu|!^{C+1}.
\]
If $f_1,\ldots,f_l$ are weakly $(A,B,C)$-mild, one obtains an extra factor $1/x^\nu$ as desired. Finally, in Section 3.2 of \cite{unif}, it is shown that $\sum_{\nu_1+\ldots+\nu_l = \nu} \text{Ch}(\nu_1,\ldots,\nu_l) \leq l^{|\nu|}$.
\end{proof}

\section{The $C^r$-parameterization theorem} \label{main section}
In this section we give a precise definition of the structure wherein the family $X_T$, $T \subset \R^k$, of our main theorem should be definable. This structure is o-minimal, a powerful tool of model theory, and thus enables us to use the cell decomposition theorem, see \cite{dries}. Furthermore, in \cite{mil} Miller has shown a preparation theorem for definable functions in this structure. Combining these results, we obtain a very strong parameterization theorem from which we will easily deduce the main theorem with the results of the last section.

\vspace{0.3cm} We start with the necessary definitions of model theory and the result of Miller.  In his paper definability (in $\R^n$) is with respect to the following language. Let $\mathcal{L}_\text{r} = \{+,-,\cdot,<,0,1\}$ be the language of ordered rings and expand it with a symbol for each of the following functions: 
\[
\tilde{f}(x) = \begin{cases} f(x) & \text{ if $x \in [-1,1]^n$} \\ 0 & \text{ elsewhere,}\end{cases}
\]
where $f: U \to \R$ is a real analytic function on an open neighborhood $U$ of $[-1,1]^n$. Denote this language $\mathcal{L}_\text{an}$. If $X$ is $\mathcal{L}_\text{an}$-definable in $\R^n$, then it is called (globally) subanalytic. Remember that a function is definable if its graph is a definable set. Next, if we expand $\mathcal{L}_\text{an}$ with a symbol for all functions \[
x \mapsto \begin{cases} x^r & \text{ if $x > 0$} \\ 0 & \text{ if $x \leq 0$,} \end{cases}
\]
for $r \in \R$, we obtain the language $\mathcal{L}^\text{pow}_\text{an}$ and the corresponding structure $\R^\text{pow}_\text{an}$. If $X$ is $\mathcal{L}_\text{an}^\text{pow}$-definable in $\R^n$, we say that $X$ is power-subanalytic. In \cite{mil}, Miller considers the following reducts of $\R^\text{pow}_\text{an}$. Let $\mathcal{F}$ be a Weierstrass system: a collection $\mathcal{F}_n$ of real analytic functions $\R^n \to \R$ for each $n \in \N$, containing the polynomials in $n$ variables, for all $n$ which is closed under several operations such as the ring operations, composition and Weierstrass preparation  (for a precise definition, see \cite{mil}). If one adds to the language $\mathcal{L}_r$ symbols for each $\tilde{f}$, where $f \in \mathcal{F}$, one obtains the language $\mathcal{L}_\mathcal{F}$. In particular, the easiest examples are $\mathcal{L}_r$ and $\mathcal{L}_\text{an}$, corresponding to adding no functions and all subanalytic functions respectively. For any Weierstrass system $\mathcal{F}$, one can consider its field of exponents: 
\[
K = \{ r \in \R \mid x \mapsto (1+x)^r \in \mathcal{F} \}.
\]
(Miller shows in his paper that this is indeed a field.) Note that for the smallest Weierstrass system, i.e. just polynomials in $n$ variables, this field is $\Q$ and thus any field of exponents $K$ contains $\Q$. For the largest Weierstrass system, i.e. consisting of all restrictions of all analytic functions (as above), this field is $\R$. Denote $\mathcal{L}^K_\mathcal{F}$ the language obtained by adding to $\mathcal{L}_\mathcal{F}$ a symbol for all power maps $x \mapsto x^r$ for $r \in K$ (as above), where $K$ can be any subfield of $\R$. From now on, definability is with respect to the language $\mathcal{L}_\mathcal{F}^K$, where $K$ is a subfield of the field of exponents of $\mathcal{F}$. The corresponding structure $\R_\mathcal{F}^K$ is o-minimal.

\begin{definition}[Cell]\label{cell}
 A cell in $\R^m$ is a set of the following form: $$\{ (x_1,\ldots,x_m) \in \R^m \mid \alpha_i(x_{< i}) \, \Box_{i1} \, x_i \,\Box_{i2}\, \beta_i(x_{<i}), i = 1,\ldots,m\}$$ where $x_{<i} = (x_1,\ldots,x_{i-1})$, $\alpha_i$ and $\beta_i$ are continuous definable functions, with $\alpha_i < \beta_i$, and $\Box$ is `no condition' or the conditions $<$ or $=$, where $\Box_{i2}$ is $<$ or no condition if $\Box_{i1}$ is equality. Obviously, a cell is open in $\R^m$ if and only if $\Box_{i1}$ is not equality for all $i$. In that case we call $\alpha_i$ and $\beta_i$ the walls of $x_i$.
 \end{definition}
 
 Suppose that $C$ is a cell in $\R^n$. Then, up to reordering the variables if necessary, we may suppose that for $i = 1,\ldots,m$ ($m \leq n$) the condition $\Box_{i1}$ is inequality and for the last $n-m$ variables $\Box_{i1}$ is equality. In this way, we see that any cell in $\R^n$ corresponds to the graph of a definable function $f:U \subset \R^{m} \to \R^{n-m}$. Combining this with the fact that an open cell in $\R^n$ is definably homeomorphic to $(0,1)^n$, we get that a cell in $\R^n$ is the same as the graph of a definable function $(0,1)^{m} \to \R^{n-m}$. The number $m$ is the dimension of the cell and thus is nothing more than counting how many times $\Box_{i1}$ is inequality. We will do many manipulations with these functions and thus end up in general with definable maps $U \subset (0,1)^m \to \R^{n-m}$, where $U$ is an open cell in $(0,1)^m$. To conclude, because any definable set in an o-minimal structure is a finite union of cells, it suffices to prove the main theorem\green{, where we assume $X_{T}$ to be a definable family of $m$-dimensional subsets of $[-1,1]^{n}$, with $T \subset \R^{k}$ the set of parameters,} in the case that $X_T$ is the graph of a definable map $f: T \times (0,1)^m \to [-1,1]^{n-m}$. (If $X$ does not depend on parameters, $T = \R^0$, which is by definition a set with only one point.) Later on, our map will be of the form $f: C \subset T \times (0,1)^m \to [-1,1]^{n-m}$, where the fibers $C_t$ of $C$ are open cells in $(0,1)^m$ for any $t \in T$ and we will denote $f_t$ for the map $C_t \to [-1,1]^{n-m}$ given by $x \mapsto f(t,x)$.
 
 \vspace{0.3cm} In this section \green{$T$ is a definable subset of $\R^{k}$ and} we will often use the following notation. Suppose $U \subset \R^m$. Then we denote by $\overline{U}$ the topological closure of $U$ in $\mathbb{R}^m$ endowed with the standard topology. Furthermore, for any $i \in \{2,\ldots,m\}$, $\pi_{<i}: T \times \R^m \to T \times \R^{i-1}$ denotes the map
 \[
 (t,x_1,\ldots,x_m) \mapsto (t,x_1,\ldots,x_{i-1})
 \]
and $\pi_{<1}(t,x) = t$. The following two definitions coincide with \cite[Definition 4.4.1]{unif}.
 
\begin{definition}[Centre of a cell]
Suppose that $C$ is a cell in $\R^m$. A definable continuous map $\theta: \pi_{<m}(C) \to \R$ is called a centre for $C$ if its graph and $\overline{C}$ are disjoint or if its graph is contained in $\overline{C} \setminus C$\green{, and} $\theta$ is identically zero or $\theta(x_{<m}) \sim x_m$ for all $x \in C$, that is, there exists some $\epsilon \in (0,1)$ such that for all $x \in C$: $$\epsilon x_m \leq \theta(x_{<m}) \leq \epsilon^{-1}x_m.$$
\end{definition}

The next definition motivates why $\theta$ is called the centre of a cell.

\begin{definition}[Prepared with centre]
A bounded definable function $f: C \to \R$, where $C$ is a cell in $\R^m$, is called prepared with centre $\theta$ if $\theta$ is a centre for $C$ and $f$ can be written in the following way: 
\[
f(x) = b_j(x)F(b(x)),
\]
where $b: C \to \R^N$ (for some $N \in \N$) is bounded, $b_j$ is a component function of $b$ and the component functions $b_i$, $i \in \{1,\ldots,N\}$, of $b$ are of the following form: \[
a_i(x_{<m})|x_m-\theta(x_{<m})|^{r_i}\]
with $r_i \in K$ and $a_i: \pi_{<m}(C) \to \R$ definable and, finally, $F$ is a non-vanishing analytic function on an open neighborhood of $\overline{b(C)}$. We call $b$ the associated bounded range map of $f$. A map $f: C \to \R^n$ is prepared with centre $\theta$ if all of its component functions are prepared with centre $\theta$ and moreover have the same bounded range map $b$.
\end{definition}

We now state the preparation theorem as in \cite[Proposition 4.4.2]{unif}, which follows from the main theorem of \cite{mil} by \cite[Lemma 3.7 and Lemma 4.4]{mil}.

\begin{theorem}\label{weierstrass}
Suppose that $f: X \subset \R^m \to \R^n$ is a bounded definable map. Then there exists a finite partition of $X$ into cells $C_i$ with centre $\theta_i$ such that the restriction $f|_{C_i}$ of $f$ to $C_i$ is prepared with centre $\theta_i$.
\end{theorem}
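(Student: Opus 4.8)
The plan is to derive this from the preparation theorem for definable functions proved by Miller in \cite{mil}, together with the cell decomposition theorem for o-minimal structures \cite{dries}. First I would recall Miller's result in the form needed here: for a single bounded definable function $g\colon X\subset\R^m\to\R$ there is a finite partition of $X$ into cells on each of which $g$ admits a preparation of the shape
\[
g(x)=a(x_{<m})\,\lvert x_m-\theta(x_{<m})\rvert^{r}\,U\bigl(y_1(x),\ldots,y_s(x)\bigr),
\]
where $\theta$ is a centre for the cell, $r\in K$, each $y_i$ is a monomial $a_i(x_{<m})\lvert x_m-\theta(x_{<m})\rvert^{r_i}$ taking values in a bounded box, and $U$ is a nonvanishing real analytic function on an open neighbourhood of the closure of that box. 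This is exactly the ``prepared with centre $\theta$'' form of the present paper once one sets $b=(b_0,b_1,\ldots,b_s)$ with $b_0=a\lvert x_m-\theta\rvert^{r}$ and $b_i=y_i$, and takes $F(b)=U(b_1,\ldots,b_s)$, so that $g=b_0\,F(b)$.

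Second, I would reduce the case of a map $f=(f_1,\ldots,f_n)$ to the case of a single function by applying the above to each component $f_j$ separately, obtaining for each $j$ a cell decomposition $\mathcal{D}_j$ of $X$ adapted to $f_j$. Taking a common refinement of $\mathcal{D}_1,\ldots,\mathcal{D}_n$ into cells — which is legitimate since the ambient structure $\R^K_\mathcal{F}$ is o-minimal and hence admits cell decomposition — one obtains a single finite partition of $X$ into cells $C_i$ on each of which every $f_j$ is individually prepared, say with centre $\theta_i^{(j)}$ and bounded range map $b^{(j)}$.

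The remaining, and genuinely substantive, point is to arrange a single centre $\theta_i$ and a single bounded range map $b$ that serve all $n$ components simultaneously on each cell $C_i$. For the common centre this is handled by \cite[Lemma 3.7]{mil}: after a further finite partition one may replace the finitely many centres $\theta_i^{(1)},\ldots,\theta_i^{(n)}$ by one centre $\theta_i$, the point being that any two centres for the same cell are either disjoint from $\overline{C_i}$ or comparable to $x_m$ up to the relation $\sim$, so a suitable definable choice dominates them all. For the common bounded range map one concatenates the tuples $b^{(j)}$ — together with the leading monomials $a^{(j)}\lvert x_m-\theta_i\rvert^{r^{(j)}}$ themselves, recentred at $\theta_i$ — into a single bounded definable map $b$; here \cite[Lemma 4.4]{mil} guarantees that, after re-expressing each unit in terms of this enlarged tuple, the corresponding analytic function still extends to a nonvanishing analytic function on a neighbourhood of $\overline{b(C_i)}$, and one checks that $b$ remains bounded since each of its components is, up to a unit, a factor of one of the bounded functions $f_j$. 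I expect this bookkeeping — in particular keeping the analytic units nonvanishing on a neighbourhood of the closure after both enlarging the range tuple and re-centring — to be the only delicate step; everything else is a direct translation of Miller's statements into the terminology set up above.
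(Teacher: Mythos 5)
Your proposal follows exactly the route the paper takes: Theorem (\ref{weierstrass}) is not proved in the paper but is quoted from \cite[Proposition 4.4.2]{unif} as a consequence of the main theorem of \cite{mil} together with \cite[Lemma 3.7 and Lemma 4.4]{mil}, which are precisely the ingredients you invoke for the common centre and the common bounded range map. Your sketch of the reduction (prepare each component, refine to a common cell decomposition, then merge centres and range tuples) is a correct and reasonable unpacking of that citation.
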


We follow \cite[Section 4]{unif}, but in our proofs, we avoid transforming parameters, i.e. we prove everything uniformly. The main reason we do this is to clearly point out that the power substitution in the proof of \cite[Theorem 2.1.3]{unif} does not require a power substitution in the parameter variables. The idea here is that we stop the inductive proof of \cite[Theorem 4.3.1]{unif} when we reach the parameter variables (see Theorem (\ref{parameter})) and then apply a (uniform) power substitution (Theorem (\ref{wallsub})).  To achieve this, we slightly adjusted some definitions , namely the notion of an a-b-m function in \cite{unif}, which is here replaced by `prepared in $x$' (see below). The main theorem will be deduced from Theorem (\ref{parameter}). In this theorem we parameterize $X_T$ with maps that satisfy the conditions of Proposition (\ref{powersub}). Finally we conclude by Lemma (\ref{ACsub}). 

\begin{definition}\label{prepx}
Suppose that $C$ is a cell in $\R^k\times(0,1)^m$ and denote an element of $C$ as a tuple $(t,x)$. A bounded definable function $f: C \to \R$ is prepared in $x$ if it can be written as: $$f(t,x) = b_j(t,x)F(b(t,x))$$ where $b: C \to \R^N$ (for some $N \in \N$) is bounded, $b_j$ is a component function of $b$, any component function $b_i$, $i \in \{1,\ldots,N\}$, of $b$ is of the form: $$b_i(t,x) = a_i(t)x^{\mu_i} = a_i(t) \prod_{l = 1}^m x_l^{\mu_{i,l}}$$ for some definable function $a_i: \pi_{<k+1}(C) \to \R$, $\mu_i \in K^m$ and $F$ is analytic and non-vanishing on an open neighborhood of $\overline{b(C)}$. We call $b$ the associated bounded monomial map of $f$. A bounded definable map $f: C \to \R^n$ is prepared in $x$ if all of its component functions are prepared in $x$ and have the same associated bounded monomial map $b$.
\end{definition}

Note that $k$ can be zero. In that case, up to a scalar $a$, the associated bounded monomial map $b$ is a monomial and the definition of prepared in $x$ coincides with the definition of an analytic-bounded-monomial map of \cite{unif}. We now show some properties of these maps that relate to the previous section. 

\begin{lemma}\label{bmweaklymild}
Let $b: U \subset (0,1)^m \to \R$, where $U$ is open, be given by $b(x) = ax^\mu$ for some $a \in \R$ and some $\mu \in \R^m$. If $b$ is bounded, then $b$ is weakly $(\green{M},B,0)$-mild, where $\green{M} = \max(1,|\mu_1|,\ldots,|\mu_m|)$ and $B = \sup_U b(x)$.
\end{lemma}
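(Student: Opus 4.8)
The plan is to compute the derivatives of $b(x)=ax^\mu$ directly, since $b$ is a single monomial and all partial derivatives can be written in closed form. First I would observe that for a multi-index $\nu=(\nu_1,\ldots,\nu_m)\in\N^m$, differentiating $x_l^{\mu_l}$ exactly $\nu_l$ times gives
\[
\left(\frac{\partial}{\partial x_l}\right)^{\nu_l} x_l^{\mu_l} = \mu_l(\mu_l-1)\cdots(\mu_l-\nu_l+1)\, x_l^{\mu_l-\nu_l},
\]
so that
\[
b^{(\nu)}(x) = a\prod_{l=1}^m \Big(\prod_{i=0}^{\nu_l-1}(\mu_l-i)\Big) x_l^{\mu_l-\nu_l} = \frac{b(x)}{x^\nu}\prod_{l=1}^m\prod_{i=0}^{\nu_l-1}(\mu_l-i).
\]
The crucial point is then to bound the product of the falling factorials. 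Using $|\mu_l-i|\le\max(1,|\mu_l|)+i\le M+i$, or more crudely $|\mu_l-i|\le M(i+1)$ since $M\ge1$, one gets $\big|\prod_{i=0}^{\nu_l-1}(\mu_l-i)\big|\le M^{\nu_l}\nu_l!$, and multiplying over $l$ yields $\prod_{l}M^{\nu_l}\nu_l! = M^{|\nu|}\prod_l\nu_l! = M^{|\nu|}\nu!\le M^{|\nu|}|\nu|!$ by the inequality $\nu!\le|\nu|!$ already recorded in the paper.

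Putting this together with $|b(x)|\le B=\sup_U b$ gives
\[
\big|b^{(\nu)}(x)\big| \le \frac{|b(x)|}{x^\nu}\, M^{|\nu|}|\nu|! \le \frac{B\, M^{|\nu|}|\nu|!}{x^\nu},
\]
which is precisely the bound required for $b$ to be weakly $(M,B,0)$-mild (taking $C=0$, so $B^{C+1}=B$ and $|\nu|!^{C+1}=|\nu|!$), since $x^\nu>0$ on $U\subset(0,1)^m$. One should also note that $b$ is $C^\infty$ on $U$ because each $x_l\mapsto x_l^{\mu_l}$ is analytic on $(0,1)$, so the smoothness requirement in the definition of weakly mild is met.

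I do not expect any serious obstacle here: the only mildly delicate point is choosing the bound on $|\mu_l-i|$ so that the falling factorial is controlled by $M^{\nu_l}\nu_l!$ rather than something worse, and this is handled by the estimate $|\mu_l-i|\le M(i+1)$ together with $M\ge1$. Everything else is a routine application of the one-variable formula for derivatives of a power function and the elementary inequality $\nu!\le|\nu|!$.
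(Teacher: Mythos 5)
Your proof is correct and follows essentially the same route as the paper: differentiate the monomial directly, write $b^{(\nu)}(x)$ as $b(x)/x^\nu$ times a coefficient, and bound that coefficient by $M^{|\nu|}|\nu|!$ using $|\mu_l - i|\le M(i+1)$. The only cosmetic difference is that you write the coefficient in closed form as a product of falling factorials (and even get the slightly sharper $M^{|\nu|}\nu!$ before relaxing to $M^{|\nu|}|\nu|!$), whereas the paper bounds it inductively by noting each successive derivative multiplies the constant by at most $M+|\nu|$.
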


\begin{proof}
Clearly, for any $\nu \in \N^m$, by the form of $b$, we have:
\[
b^{(\nu)}(x) = c(\nu,\mu)\frac{b(x)}{x^\nu},
\]
for some constant $c(\nu,\mu)$ depending on $\nu$ and $\mu$, which remains to be bounded as desired. It is easy to see that computing the next order derivative would multiply $c(\nu,\mu)$ with a factor that is at most $M + |\nu|$ in absolute value. Therefore, we find that:
\[
|c(\nu,\mu)| \leq M(M+1)\cdots(M+|\nu|-1) \leq M^{|\nu|}|\nu|!.
\]
\end{proof}

\begin{lemma}\label{prepxweak}
Suppose that $f: U \subset T \times (0,1)^m \to \R$ is prepared in $x$ with associated bounded monomial map $b$, where for any $t \in T$ the fiber $U_t$ is open. Then there exist  $A,B>0$ such that for any $t \in T$ the map $f_t$ is weakly $(A,B,0)$-mild. Moreover, if the $C^1$-norm of the associated bounded monomial map $b_t$ of $f_t$ is bounded independently of $t$, then there exist $A,B>0$ such that for any $\beta \in \N^m$ with $|\beta| \leq 1$ and $t \in T$ the map $f^{(\beta)}_t$ is weakly $(A,B,0)$-mild.
\end{lemma}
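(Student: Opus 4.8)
The plan is to reduce everything to the two preceding lemmas. Write $f(t,x) = b_j(t,x)F(b(t,x))$ as in the definition of prepared in $x$, where $b = (b_1,\ldots,b_N)$ with $b_i(t,x) = a_i(t)x^{\mu_i}$, and $F$ is analytic and non-vanishing on an open neighbourhood of $\overline{b(U)}$. First I would fix $t \in T$ and observe that each component $b_{i,t}(x) = a_i(t)x^{\mu_i}$ is of exactly the shape treated in Lemma (\ref{bmweaklymild}): it is bounded (since $b$ is bounded on $U$), so it is weakly $(M_i, B_i, 0)$-mild with $M_i = \max(1,|\mu_{i,1}|,\ldots,|\mu_{i,m}|)$ and $B_i = \sup_{U_t} |b_{i,t}|$. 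Crucially the $\mu_i$ do not depend on $t$, so $M := \max_i M_i$ is uniform in $t$, and $B := \sup_{U} |b|$ (finite since $b$ is bounded on $U$) dominates every $B_i$ uniformly in $t$. Hence $b_t$ is weakly $(M,B,0)$-mild with constants independent of $t$.

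Next I would handle $F$. Since $F$ is analytic on an open neighbourhood of the compact-closure-type set $\overline{b(U)}$, Lemma (\ref{unitmild}) gives that $F$ is $(A_F, B_F, 0)$-mild for some $A_F, B_F > 0$ (these depend only on $F$, not on $t$). Now $f_t = (\text{coordinate projection onto the }j\text{-th slot}) \circ F \circ b_t$ — more precisely, $f_t(x) = b_{j,t}(x) \cdot F(b_t(x))$, so I should instead write $f_t = P_j \cdot (F \circ b_t)$ where $P_j$ picks out the $j$-th component. By Proposition (\ref{thmweakmild}), the composition $F \circ b_t$ of the $(A_F,B_F,0)$-mild map $F$ with the weakly $(M,B,0)$-mild map $b_t$ is weakly $(A',B',0)$-mild, with $A',B'$ given by the formula of Corollary (\ref{compformula}) applied with $C = 0$, hence independent of $t$. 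The factor $b_{j,t}$ is itself weakly $(M,B,0)$-mild (again uniformly), and by Proposition (\ref{product}) the product of two weakly $(A,B,0)$-mild functions is weakly $(2\max(\cdot),\, (\max B)^2, 0)$-mild, so $f_t$ is weakly $(A,B,0)$-mild with $A,B$ independent of $t$. This proves the first assertion.

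For the moreover part, the point is that first-order derivatives of $f$ in $x$ are again, after differentiation, sums of terms of the same structural type. Writing $f^{(\beta)}_t$ for $|\beta| \le 1$ and applying the product rule to $f_t = b_{j,t}\,(F \circ b_t)$, one gets $f^{(\beta)}_t = b_{j,t}^{(\beta)}(F\circ b_t) + b_{j,t}\cdot (F\circ b_t)^{(\beta)}$. The derivative $b_{j,t}^{(\beta)}$ is still of the form (constant in $t$)$\cdot a_j(t) x^{\mu_j - \beta}$, hence weakly mild with uniform constants by Lemma (\ref{bmweaklymild}); and $(F\circ b_t)^{(\beta)} = \sum_i (\partial F/\partial y_i)(b_t)\,b_{i,t}^{(\beta)}$ by the chain rule, where each $\partial F/\partial y_i$ is again analytic on a neighbourhood of $\overline{b(U)}$ hence uniformly $(A,B,0)$-mild, and each $b_{i,t}^{(\beta)}$ is uniformly weakly mild. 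Applying Proposition (\ref{thmweakmild}), Proposition (\ref{product}), and the fact that a finite sum of weakly $(A,B,0)$-mild functions (up to order $r$) is weakly $(\max A, \sum B, 0)$-mild, we conclude $f^{(\beta)}_t$ is weakly $(A,B,0)$-mild with constants uniform in $t$, under the stated hypothesis that the $C^1$-norm of $b_t$ is bounded independently of $t$ (which is exactly what guarantees the relevant suprema $\sup_{U_t}|b_{i,t}^{(\beta)}|$ are uniformly bounded, feeding the uniform constants into Lemma (\ref{bmweaklymild}) and Lemma (\ref{unitmild})).

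The main obstacle is bookkeeping the uniformity: one must be careful that every constant produced — the $B$'s coming from suprema over the fibre $U_t$, and the domain of analyticity of $F$ used in Lemma (\ref{unitmild}) — can be chosen independently of $t$. The exponents $\mu_i$ and the function $F$ are the same for all $t$, so the only $t$-dependence sits in the scalars $a_i(t)$; boundedness of $b$ on all of $U$ (not fibrewise) is what controls those. For the moreover statement the extra hypothesis on the $C^1$-norm of $b_t$ is precisely inserted to make the first-derivative suprema uniform, so the argument goes through verbatim with $\beta$ ranging over $|\beta| \le 1$.
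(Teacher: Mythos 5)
Your proposal is correct and follows essentially the same route as the paper: Lemma (\ref{bmweaklymild}) for the components of $b_t$ with constants uniform in $t$ because the exponents $\mu_i$ and the global bound on $b$ do not depend on $t$, then Lemma (\ref{unitmild}) and Proposition (\ref{thmweakmild}) for $F\circ b_t$, Proposition (\ref{product}) for the product with $b_{j,t}$, and the product/chain rule plus the $C^1$-boundedness hypothesis for the first-order derivatives. The only cosmetic difference is that the paper writes the derivative expansion in one line as $\frac{\partial b_{j,t}}{\partial x_i}(F\circ b_t)+\sum_l b_{j,t}\bigl(\frac{\partial F}{\partial b_{l,t}}\circ b_t\bigr)\frac{\partial b_{l,t}}{\partial x_i}$ rather than separating out $(F\circ b_t)^{(\beta)}$ first.
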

\begin{proof}	
Since $f$ is prepared in $x$, we have that $f(t,x) = b_j(t,x)F(b(t,x))$ as in Definition (\ref{prepx}). By Lemma (\ref{bmweaklymild})\green{, there exist $A,B>0$, such that} for any $t \in T$ the map $b_t$ is weakly $(A,B,0)$-mild. The constant $A$ does not depend on $t$ and, since $b$ is bounded, we may suppose $B$ does not depend on $t$. Finally, by Proposition (\ref{thmweakmild}) and Lemma (\ref{unitmild}), for any $t \in T$ we have that $F \circ b_t$ is weakly $(A,B,0)$-mild for some $A,B > 0$ that do not depend on $t$ and we can conclude by Proposition (\ref{product}) \green{that for any $t \in T$, $f_{t}$ is weakly $(A,B,0)$-mild for some $A,B>0$ that do not depend on $t$.}

Now fix any $t \in T$ and $1 \leq i \leq m$.  By the product rule we have $$\frac{\partial f_t}{\partial x_i} = \frac{ \partial b_{j,t}}{\partial x_i}(F\circ b_t) + \sum_{l = 1}^N b_{j,t}\green{\left(\frac{\partial F}{\partial b_{l,t}}\circ b_{t}\right)}\frac{\partial b_{l,t}}{\partial x_i}.$$ Since we can bound the derivatives of $b_t$ independently of $t$, this is for any $t$ a sum of weakly $(A,B,0)$-mild functions for some $A,B>0$ independent of $t$ and thus is also weakly $(A,B,0)$-mild (for possibly larger $A>0$ and $B>0$).
\end{proof}

This lemma motivates the following definition.

\begin{definition}
Suppose that $f: U \subset \R^k \times \R^m \to \R^n$ is a family of functions. If there exists a $B$ such that for all $t \in \pi_{<1}(U)$ and $x \in U_t$ we have that $|f_t(x)|<B$, we say that $f$ is bounded in $x$. More generally, if $r$ is a natural number, we say $f$ is $C^r$-bounded in $x$ if  there exists a $B$ such that for any $t \in \pi_{<1}(U)$ the map $f_t$ is $C^r$ and for any $x \in U_t$ the $C^r$-norm of $f_t$ is bounded by $B$.
\end{definition}

As an example, let $T = (0,1)$ and consider the family of maps $f_T: (t,x) \mapsto t^2/x$ on the cell given by \begin{align*} 0 < t < 1, \\ t < x < 1. \end{align*} Then for a fixed $t \in T$, the map $f_t$ is in fact mild, but the upper bound depends on $t$. However, one sees that $f$ is $C^1$-bounded in $x$.

Lemma (\ref{prepxweak}) shows that if $f$ is prepared in $x$ such that its associated bounded monomial map $b$ is $C^1$-bounded in $x$, then we can apply Proposition (\ref{powersub}) to $f$. The following result, which we need in the proof of Proposition (\ref{wallsub}), is a more refined version of Proposition (\ref{powersub}) for the bounded monomial map $b$. The technique of the proof is similar, but will use the particular form of $b$.

\begin{proposition}\label{bmpower}
Let $b: U \subset (0,1)^m \to \R$, where $U$ is open, be given by $b(x) = ax^\mu$ for some $a \in \R$ and $\mu \in \R^m$. Suppose that $b$ is $C^1$-bounded. Let $r>0$ be an integer and $\phi : (0,1)^m \to (0,1)^m$ be the map given by:
\[
(x_1,\ldots,x_m) \mapsto (x_1^{n_1},\ldots,x_m^{n_m}),
\]
where for all $i \in \{1,\ldots,m\}$, we have that $n_i \geq r^l$ for some integer $l \geq 1$ and set $V = \phi^{-1}(U)$. Then $(b \circ \phi)^{1/r^{l-1}}\green{: V \to \R}$ is $(\green{\max(n_{1},\ldots,n_{m})}A,B,0)$-mild up to order $r$, for some $A,B>0$ depending on $b$ only.
\end{proposition}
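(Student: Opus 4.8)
The plan is to bypass the Fa\`a di Bruno formula and exploit that $g := (b\circ\phi)^{1/r^{l-1}}$ is itself a monomial, so that it can be differentiated explicitly. Writing $b(x)=ax^{\mu}$, one has $g(x)=\tilde a\,x^{\tilde\mu}$ with $\tilde\mu_i=n_i\mu_i/r^{l-1}$ (the root being well defined since $b$ is non-negative on its domain, or $r^{l-1}$ is odd). As $b$, hence $g$, is bounded, Lemma (\ref{bmweaklymild}) applied to $g$ yields $g^{(\nu)}(x)=c(\nu)\,g(x)/x^{\nu}$ with $|c(\nu)|\leq M_g^{|\nu|}|\nu|!$ and $M_g=\max(1,|\tilde\mu_1|,\ldots,|\tilde\mu_m|)$; since $r^{l-1}\geq 1$ and $n_i\leq N:=\max(n_1,\ldots,n_m)$, this gives $M_g\leq NA$ with $A:=\max(1,|\mu_1|,\ldots,|\mu_m|)$ depending on $b$ only. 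So for $|\nu|\leq r$ and $x\in V$ one gets $|g^{(\nu)}(x)|\leq (NA)^{|\nu|}|\nu|!\cdot|g(x)|/x^{\nu}$, and the whole problem is reduced to bounding $|g(x)|/x^{\nu}$ by a constant depending on $b$ only.

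I expect this reduction to be the only real obstacle, because a priori $g$ is merely \emph{weakly} mild: the factor $1/x^{\nu}$ is unbounded and must be absorbed entirely, and this is where the $C^1$-boundedness and the hypothesis $n_i\geq r^l$ come in. First, since $b$ is $C^1$-bounded, $\partial b/\partial x_i=\mu_i\,b(y)/y_i$ is bounded on $U$, so $|b(y)|\leq K_i y_i$ there whenever $\mu_i\neq 0$, with $K_i$ depending on $b$ only. Next, one may assume $g^{(\nu)}\not\equiv 0$, so that $\nu_i>0$ forces $\tilde\mu_i\neq 0$ and hence $\mu_i\neq 0$; evaluating the previous inequality at $y=\phi(x)$ then gives $|g(x)|=|b(\phi(x))|^{1/r^{l-1}}\leq K_i^{1/r^{l-1}}x_i^{n_i/r^{l-1}}$ for every such $i$.

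To finish I would split $|g(x)|=\prod_{i\in S}|g(x)|^{\nu_i/|\nu|}$ over $S:=\{i:\nu_i>0\}$ (so that $\sum_{i\in S}\nu_i=|\nu|$), use the last inequality in the $i$-th factor, and obtain
\[
\frac{|g(x)|}{x^{\nu}}\;\leq\;\prod_{i\in S} K_i^{\nu_i/(|\nu|r^{l-1})}\;x_i^{\,\nu_i(n_i-|\nu|r^{l-1})/(|\nu|r^{l-1})}.
\]
The over-substitution now pays off: since $|\nu|\leq r$ one has $|\nu|r^{l-1}\leq r^{l}\leq n_i$, so every exponent of $x_i$ is $\geq 0$ and the corresponding power is $\leq 1$ on $(0,1)$; and the remaining product is $\leq\prod_{i\in S}\max(1,K_i)^{\nu_i/|\nu|}\leq\max_i\max(1,K_i)=:B$, a constant depending on $b$ only (enlarge $B$ so that it also bounds $\sup_V|g|$, which handles $|\nu|=0$). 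Combining with the previous display gives $|g^{(\nu)}(x)|\leq B(NA)^{|\nu|}|\nu|!$ for all $|\nu|\leq r$ and $x\in V$, i.e. $g$ is $(NA,B,0)$-mild up to order $r$, which is the claim.

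I would stress that the key point — and the reason one substitutes with exponents $\geq r^l$ here, rather than $\geq r$ as in Proposition (\ref{powersub}) — is exactly the inequality $|\nu|r^{l-1}\leq n_i$: after the weighted split it lets each coordinate pay at once for its own negative power $x_i^{-\nu_i}$ and for its proportional share of the decay needed from $g$, so that no negative power of any $x_i$ survives.
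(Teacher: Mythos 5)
Your proof is correct, and it takes a genuinely different route from the paper's. The paper treats $(b\circ\phi)^{1/r^{l-1}}$ as the composition $b^{1/r^{l-1}}\circ\phi$ and runs the Fa\`a di Bruno machinery of Proposition (\ref{powersub}) once more, tracking the power of each $x_i$ in every term of the sum; it then concentrates all the negative powers on a single worst coordinate $x_k$ (the minimal one among those with $\lambda_k\neq 0$) and absorbs $x_k^{-|\nu|}$ using $n_k\geq r^l$ together with the identity $x^{N\mu}/x_k^{n_k}=\tfrac{1}{\mu_k}(\partial b/\partial x_k)(\phi(x))$ and the $C^1$-bound. You short-circuit all of this by observing that the composite is itself a monomial $\tilde a\,x^{\tilde\mu}$ with $\tilde\mu_i=n_i\mu_i/r^{l-1}$, so Lemma (\ref{bmweaklymild}) gives the exact form $g^{(\nu)}=c(\nu)g(x)/x^\nu$ with $|c(\nu)|\leq(NA)^{|\nu|}|\nu|!$, and the whole issue reduces to the single estimate $|g(x)|/x^\nu\leq B$; you then distribute the absorption over all coordinates in $S=\{i:\nu_i>0\}$ via the weighted geometric splitting $|g|=\prod_{i\in S}|g|^{\nu_i/|\nu|}$, rather than loading everything onto one index. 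The essential inputs are identical in both arguments ($C^1$-boundedness giving $|b(y)|\leq K_iy_i$ when $\mu_i\neq 0$, the reduction to $\mu_i\neq 0$ for the relevant indices, and the over-substitution $n_i\geq r^l\geq|\nu|r^{l-1}$), and the resulting constants agree, but your version is more elementary and makes transparent why the exponent $r^l$ is exactly what is needed; the paper's version has the advantage of being a uniform template that it reuses verbatim for Propositions (\ref{powersub}) and (\ref{thmweakmild}), where the outer function is not a monomial and Fa\`a di Bruno cannot be avoided. One small point to keep at the paper's level of care: the real-valuedness of the $r^{l-1}$-th root (you note $b\geq 0$ or $r^{l-1}$ odd) is glossed over in the paper as well, via the normalization $a=1$.
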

\begin{proof}
For simplicity, we assume that $a = 1$. We have that $(b \circ \phi)(x) = x^{N\mu}$, where $N\mu = (n_i\mu_i)_i$. We apply Theorem (\ref{thmfaa}) to the composition $\green{(b \circ \phi)^{1/r^{l-1}}} =  b^{1/r^{l-1}} \circ \phi$ to get:
\[
|(b^{1/r^{l-1}} \circ \phi)^{(\nu)}(x)| \leq \sum_{1 \leq |\lambda| \leq n} |(b^{1/r^{l-1}})^{(\lambda)}(\phi(x))| \sum_{s = 1}^n \sum_{p_s(\nu,\lambda)} \nu! \prod_{j = 1}^s \frac{|(\phi^{(l_j)}(x))^{k_j}|}{k_j!(l_j!)^{|k_j|}}
\]
for any $x \in V$. We again compute the power of $x_i$ in a fixed term of this sum as in the proof of Proposition (\ref{powersub}). The product of the $|(\phi^{(l_j)}(x))^{k_j}|$ gives a power $\sum_{j = 1}^s (n_i-l_{j,i})k_{j,i}$. \green{Using the notation of Lemma (\ref{bmweaklymild}),} we have that
\[
(b^{1/r^{l-1}})^{(\lambda)}(\phi(x)) = c(\lambda,r^{-(l-1)}\mu) \frac{x^{(1/r^{(l-1)})N\mu}}{x^{N\lambda}},
\]
where $N\lambda = (n_i\lambda_i)_i$ and $c(\lambda,r^{-(l-1)}\mu)$ is a constant depending on $\lambda,r,l$ and $\mu$. We have that $x_i^{\sum_{j = 1}^s ((n_i-l_{j,i})k_{j,i}) - n_i \lambda_i} = x_i^{-\sum_{j = 1}^s l_{j,i}k_{j,i}}$. \green{Let $k \in \{1,\ldots,m\}$ be such that $x_{k} = \min_{i}\{x_{i} \mid \lambda_{i} \neq 0\}$. We may then assume that $\mu_{k} \neq 0$ for the computations below, since if $\mu_{k} = 0$, the partial derivative of $b^{1/r^{l-1}}$ with respect to $\lambda$ is zero since $\lambda_{k} \neq 0$.} Then we bound the product over all $x_i$ in terms of $x_k$:
\[
\prod_{i = 1}^m x_i^{-\sum_{j = 1}^s l_{j,i}k_{j,i}} \leq x_k^{-|\nu|}.
\]
Finally assume $|\nu| \leq r$. Using that $n_k \geq r^l$, we further bound:
\[
\frac{x^{(1/r^{(l-1)})N\mu}}{x_k^{|\nu|}} \leq \frac{x^{(1/r^{(l-1)})N\mu}}{x_k^{r}} = \left( \frac{x^{N\mu}}{x_k^{r^l}} \right)^{1/r^{l-1}} \leq  \left( \frac{x^{N\mu}}{x_k^{n_k}} \right)^{1/r^{l-1}}.
\]
Finally, observe that
\[
\frac{x^{N\mu}}{x_k^{n_k}} = \frac{1}{\mu_k}\left(\frac{\partial}{\partial x_k} b\right)(\phi(x)),
\]
which is bounded since $\phi(x) \in U$ and $b$ is $C^1$-bounded on $U$. This bound only depends on $b$ and the $r^{l-1}$-th root of this bound can be bounded by some constant that only depends on $b$\green{, not on $r$ or $l$}. As in the proof of Lemma (\ref{bmweaklymild}), we can bound $|c(\lambda,r^{-(l-1)}\mu)|$ by $M^{|\lambda|}|\lambda|!$, where $M>0$ \green{can be further bounded to only depend on $b$. One now finishes the proof similarly as the (end of) the proof of Proposition (\ref{powersub}).}
\end{proof}

The next proposition is one of the two main ingredients for the proof of the main theorem.

\begin{proposition}\label{wallsub}
Suppose that $f: C \subset T \times (0,1)^m \to [-1,1]$ is prepared in $x$ such that the associated bounded monomial $b$ is $C^1$-bounded in $x$. Suppose moreover that for any $t \in T$, the cell $C_t$ is open and that the walls of $C$ are also prepared in $x$ with associated bounded monomial maps that are $C^1$-bounded in $x$. Let $r>0$ be any integer and consider the map $\phi_r: T \times (0,1)^m \to T \times (0,1)^m$ given by $$\phi_r(t,x) = (t,x_1^{r^m},x_2^{r^{m-1}},\ldots,x_m^{r}).$$ Then $C_r = \phi_r^{-1}(C)$ is a cell and there exists an $A>0$ such that for any $t \in T$ the walls of the open cell $C_{r,t}$ of $x_1$ are $(A,0)$-mild, for $i = 2,\ldots,m$, the walls of $x_i$ are $(r^mA,0)$-mild up to order $r$ and $(f \circ \phi_r)_t: C_{r,t} \to [-1,1]$ is $(r^mA,0)$-mild up to order $r$.
\end{proposition}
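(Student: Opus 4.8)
The plan is to read $\phi_r$ as a staggered power substitution, so that the statement splits into a piece for $f$ handled by Proposition~\ref{powersub}, a trivial piece for the walls of $x_1$, and a piece for the remaining walls handled by Proposition~\ref{bmpower}; the staggered exponents $r^m,r^{m-1},\dots,r$ are chosen precisely so that the root forced by the $i$-th wall matches the exponents then available. Throughout I may assume, after a harmless refinement of $C$, that every wall takes values in $(0,1)$ (replace a lower wall by $0$ where it is $\le 0$ and an upper wall by $1$ where it is $\ge 1$; there the constraint is vacuous and the replacement wall is constant) and that the bounded monomial maps involved have values in $[-1,1]$; since in a prepared form $\alpha_i=b_jF(b)$ the unit $F$ has constant sign on the connected set $\overline{b(C)}$ and a lower wall is then positive, all the quantities below may be taken positive. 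That $C_r=\phi_r^{-1}(C)$ is again a cell is immediate, since for fixed $t$ the map $x\mapsto(x_1^{r^m},\dots,x_m^r)$ is an increasing homeomorphism of $(0,1)^m$; its wall for $x_i$ is $\alpha_i\circ\psi_i$ raised to the power $1/r^{m-i+1}$, where $\psi_i$ substitutes $x_j\mapsto x_j^{r^{m-j+1}}$ for $j<i$ (and similarly for $\beta_i$), and every exponent occurring in $\psi_i$ is $\ge r^{m-i+2}$.

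For $f\circ\phi_r$: by Lemma~\ref{prepxweak} applied to $f$ and its associated bounded monomial map $b$ (which is $C^1$-bounded in $x$), there are $A,B>0$ such that for every $t\in T$ the map $f_t$ and all its first-order partial derivatives are weakly $(A,B,0)$-mild. Enlarging $A$ to $1$ if necessary (the remark following Proposition~\ref{powersub}), Proposition~\ref{powersub} applied to $f_t$ with the substitution $(x_1,\dots,x_m)\mapsto(x_1^{r^m},\dots,x_m^r)$, all of whose exponents are $\ge r$, yields that $(f\circ\phi_r)_t$ is $(r^mA(m+1),B,0)$-mild up to order $r$, uniformly in $t$. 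Since $|f|\le 1$, a standard rescaling absorbs the factor $B$ (for $B\ge 1$ and $|\nu|\ge 1$ one has $B\cdot a^{|\nu|}\le(aB)^{|\nu|}$ for any $a>0$), so $(f\circ\phi_r)_t$ is $(r^mc_1,0)$-mild up to order $r$ for a constant $c_1$ depending only on $f$ and $m$.

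For the walls: those of $x_1$ depend only on $t$, hence are constants in $[-1,1]$ for fixed $t$ and thus $(A,0)$-mild for any $A>0$. For $x_i$ with $i\ge 2$, write a wall as $\alpha_i=b_jF(b)$ in prepared form with $C^1$-bounded $b$; the corresponding wall of $C_r$ is $(b_j\circ\psi_i)^{1/r^{m-i+1}}\cdot\bigl(F^{1/r^{m-i+1}}\circ(b\circ\psi_i)\bigr)$. Proposition~\ref{bmpower}, applied to each monomial component of $b$ with its parameter $l=m-i+2$ (so that $r^{l-1}=r^{m-i+1}$ is exactly the root we need and every exponent of $\psi_i$ is $\ge r^l$), shows that $(b_j\circ\psi_i)^{1/r^{m-i+1}}$ — and, with $l=1$, each $b_l\circ\psi_i$ — is $(r^mc,0)$-mild up to order $r$ (a constant $c$, not necessarily the same at each occurrence), uniformly in $t$. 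Since $F>0$ one has $F^{1/r^{m-i+1}}=\exp\bigl(\tfrac{1}{r^{m-i+1}}\log F\bigr)$, and since $\tfrac{1}{r^{m-i+1}}\log F$ is analytic with $C^r$-norm bounded independently of $r$ by the $C^0$-norm of $\log F$, Lemma~\ref{unitmild} gives that $F^{1/r^{m-i+1}}$ is $(A',B',0)$-mild with constants independent of $r$; hence by the up-to-order-$r$ version of Corollary~\ref{compformula} the composite $F^{1/r^{m-i+1}}\circ(b\circ\psi_i)$ is $(r^mc,0)$-mild up to order $r$, and Proposition~\ref{product} combines the two factors into an $(r^mc_2,0)$-mild-up-to-order-$r$ wall (absorbing the resulting $B$-factor as before, since the wall lies in $[-1,1]$). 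Taking $A=\max(c_1,c_2,1)$ over the finitely many walls (and $f$) finishes the proof.

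The step I expect to be the main obstacle is this last one: matching the root $1/r^{m-i+1}$ dictated by the $i$-th wall to the exponents of $\psi_i$ so that Proposition~\ref{bmpower} applies — which is exactly what forces the staggering $r^m,r^{m-1},\dots,r$ of $\phi_r$ — together with controlling the $r^{m-i+1}$-th root of the analytic unit $F$ uniformly in $r$. The sign and boundedness reductions on the walls are routine but need a careful statement.
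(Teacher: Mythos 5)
Your proof is correct and follows essentially the same route as the paper's: Lemma~\ref{prepxweak} plus Proposition~\ref{powersub} for $f\circ\phi_r$, nothing to do for the $x_1$-walls, and for the walls of $x_i$ with $i\ge 2$ an application of Proposition~\ref{bmpower} with $l=m-i+2$ to the monomial factor, a uniform-in-$r$ bound for the $r^{m-i+1}$-th root of the unit $F$, and then Corollary~\ref{compformula} and Proposition~\ref{product}, with the $B$-factors absorbed at the end exactly as in the paper. The one sentence to tighten is your justification that $F^{1/r^{m-i+1}}$ is mild uniformly in $r$ --- the claim that the $C^r$-norm of $\tfrac{1}{r^{m-i+1}}\log F$ is bounded by the $C^0$-norm of $\log F$ is not literally true --- but the conclusion you want does follow from the lemmas you cite (apply Lemma~\ref{unitmild} to $\log F$ to get $(A_0,B_0,0)$-mildness, note this persists for $\tfrac1N\log F$ for every $N\ge 1$, and compose with $\exp$ on the fixed bounded range via Corollary~\ref{compformula}); the paper achieves the same uniformity by composing $F$ with $y\mapsto y^{1/N}$, which is $(S,S,0)$-mild on $(1/S,S)$ uniformly in $N$.
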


\begin{proof}
Fix some $t \in T$ and consider a wall of the cell $C_{t}$, say $\alpha_i$ of $x_i$. If $i = 1$, the walls are constant and there is nothing to show, so suppose $i \geq 2$. Then $\alpha_i(x_{<i}) = b_{j,t}(x_{<i})F(b_t(x_{<i}))$ \green{for some $b$ and $F$ as in Definition (\ref{prepx})}, since $\alpha_i$ is prepared in $x$. It follows that the corresponding wall of $x_i$ in the cell $C_{r,t}$ is given by:
\[
\alpha_{r,t}(x_1,\ldots,x_{i-1}) = \sqrt[r^{m-i+1}]{\green{\alpha_{i,t}}(x_1^{r^m},\ldots,x_{i-1}^{r^{m-i+2}})}.
\]
By \green{a uniform version of} Proposition (\ref{bmpower}), we have that $\sqrt[r^{m-i+1}]{b_{j,t}(x_1^{r^m},\ldots,x_{i-1}^{r^{m-i+2}})}$ is $(r^mA_1,B_1,0)$-mild up to order $r$ for some $A_1,B_1 > 0$ independent of $t$ and $r$. \green{ $A_{1}$ does not depend on $t$ since it only depends on the exponent of $x$ in $b_{j}$, which does not depend on $t$ by Definition (\ref{prepx}). To show that $B_{1}$ does not depend on $t$, one should dismiss the assumption that $a = 1$ in the proof of (\ref{bmpower}), but can later use that $b_{j}$ is $C^{1}$-bounded in $x$ to find $B_{1}$ independent of $t$ (and also $r$ and $l$).}

Since $F$ is a non-vanishing analytic function on some open neighborhood of $\im(b_t)$, we may suppose that there is some $S>1$ such that $\im(F) \subset (1/S,S)$. On this domain, the function $\sqrt[r^{m-i+1}]{}$ is $(S,S,0)$-mild. Hence, $\sqrt[r^{m-i+1}]{} \circ F$ is an $(A_F,B_F,0)$-mild map for some $A_F,B_F>0$ by Lemma (\ref{unitmild}) and Corollary (\ref{compformula}), where $A_F$ and $B_F$ only depend on $F$, not on $t$ or $r$. Next, $b_t(x_1^{r^m},\ldots,x_{i-1}^{r^{m-i+2}})$ is $(r^mA',B',0)$-mild up to order $r$ by Proposition (\ref{powersub}) for some $A',B' > 0$ depending only on $b$,not on $t$ or $r$. Applying \green{the up to order $r$ version of} Corollary (\ref{compformula}), we see that $\sqrt[r^{m-i+1}]{F(b_t(x_1^{r^m},\ldots,x_{i-1}^{r^{m-i+2}}))}$ is $(r^mA_2,B_2,0)$-mild up to order $r$ for some $A_2,B_2 > 0$ depending only on $\alpha_i$, not on $t$ or $r$.

By Proposition (\ref{product}), we see that $\alpha_{r,t}$ is $(r^mA,B,0)$-mild up to order $r$ for some $A,B > 0$ independent of $t$ and $r$. Since $\im(\alpha_{r,t}) \subset (0,1)$, after possibly enlarging $A$, we can conclude that $\alpha_{r,t}$ is $(r^mA,0)$-mild up to order $r$ for some $A>0$ independent of $t$ and $r$. Finally, by Proposition (\ref{powersub}) and Lemma (\ref{prepxweak}), the function $(f \circ \phi_r)_t$ is $(r^mA'',B'',0)$-mild up to order $r$ for some $A'',B''>0$ independent of $t$ and $r$. Since $|f| \leq 1$, after possibly enlarging $A''$, we have that $(f \circ \phi_r)_t$ is $(r^mA'',0)$-mild up to order $r$. We conclude after redefining $A$ to be the maximum over all $A$ corresponding to the mildness of the walls and $A''$ of the mildness of $(f \circ \phi_r)_t$.
\end{proof}

\green{We will now show one can obtain the conditions of this proposition.} It is a parameterization result, a slight modification of \cite[Theorem 4.3.1]{unif}, that will allow us to easily deduce the main theorem.

\begin{theorem}\label{parameter}
Suppose that $X_T$ is the graph of a definable function $\green{\varphi}: C \subset T \times (0,1)^m \to [-1,1]^{n-m}$, where $C$ is \green{open in $T \times (0,1)^{m}$}. Then there exist finitely many definable maps $$f_l: C_l \to X_T$$ such that:
\begin{enumerate}
\item $\bigcup_{l} \text{Im}(f_l) = X_T$,
\item for each $l$, $C_l$ is an open cell in $T_l \times (0,1)^m$, where $T_l$ is a cell contained in $T$,
\item for each $l$ and for any $(t,x) \in C_l$, $f_l(t,x) \in X_t$, thus $f_l$ is a family of maps $C_{l,t} \to X_t$ with $C_{l,t}$ open in $(0,1)^m$,
\item each $f_l$ is prepared in $x$ and the associated bounded monomial map of $f_l$ is $C^1$-bounded in $x$,
\item the walls of all $C_l$ are prepared in $x$ and their associated bounded monomial maps are $C^1$-bounded in $x$.
\end{enumerate}
\end{theorem}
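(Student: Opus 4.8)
The plan is to give a uniform-in-$t$ version of the proof of \cite[Theorem 4.3.1]{unif}: one runs the recursive normalization procedure of \emph{loc.\ cit.}, but halts it as soon as it would begin to act on the parameter variables, so that all the $t$-dependence is left inside the coefficient functions $a_i(t)$. Concretely, the argument is a downward induction on the number $m$ of non-parameter variables. When $m=0$ the set $X_T$ is already the graph of $\varphi\colon C\subset T\to[-1,1]^{n}$; one partitions $C$ into finitely many cells $T_l\subset T$ by cell decomposition and takes $f_l=(\id,\varphi)|_{T_l}$, which is prepared in $x$ with bounded monomial map $(\varphi_1,\dots,\varphi_n)$ and unit $F\equiv1$, is $C^{1}$-bounded in $x$ trivially, and — having no $x$-walls — satisfies (1)--(5).

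For the inductive step, first apply the preparation theorem (Theorem \ref{weierstrass}) to $\varphi$, prepared in its last variable $x_m$, and refine by cell decomposition: one obtains a finite partition of $C$ into open cells $C'$ (the lower-dimensional pieces carry fewer variables and are disposed of by the inductive hypothesis after precomposing the resulting parameterizations with a coordinate projection $(0,1)^{m}\to(0,1)^{m-1}$), each equipped with a centre $\theta\colon\pi_{<m}(C')\to\R$ for which $\varphi|_{C'}=b_jF(b)$ with $b_i=a_i(x_{<m})|x_m-\theta|^{r_i}$. One then normalizes $x_m$: since $x_m>0$, when $\theta\equiv 0$ one already has $x_m^{r_i}$, and when $\theta\not\equiv0$ one uses the comparability $\theta\sim x_m$ and the constancy of the sign of $x_m-\theta$ on $C'$ to pass to the variable $|x_m-\theta(x_{<m})|$, rescaled by a definable function of $x_{<m}$ so that it ranges over $(0,1)$. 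One also performs a power substitution $x_i\mapsto x_i^{n}$ on the $x$-variables with $n$ depending only on the exponents $r_i$ (in particular not on $r$), clearing all the exponents that occur into $\{0\}\cup[1,\infty)$; this is precisely what will keep the monomial maps $C^{1}$-bounded on $(0,1)^{m}$. After these reductions the range map of $\varphi$ on the new cell has components $a_i(x_{<m})\,x_m^{r_i}$ with $a_i$ depending on $x_{<m}$ alone, and the same holds for the walls of $x_m$, while the walls of $x_1,\dots,x_{m-1}$ are unchanged.

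It then remains to normalize the $x_{<m}$-dependence. Collect all the coefficients $a_i$, all the walls of the current cell, and their first-order partial derivatives into one finite family of bounded definable functions on an open cell $D\subset T\times(0,1)^{m-1}$, and apply the inductive hypothesis (with $m-1$) to its graph. This produces finitely many reparameterizations $g_\beta\colon D_\beta\to D$ of the $x_{<m}$-variables, prepared in $x_{<m}$ with $C^{1}$-bounded monomial maps, over cells $D_\beta$ with the same kind of walls. Pulling back along $g_\beta\times\id_{x_m}$ and using that a monomial in $x_m$ times a function prepared in $x_{<m}$ is again prepared in $x=(x_{<m},x_m)$ — one absorbs the monomials introduced by $g_\beta$ into an enlarged monomial map, the unit $F$ remaining analytic and non-vanishing on a neighbourhood of the closure of its range — one sees that $\varphi$ and the walls become prepared in $x$; the composition estimates of Section \ref{mild functions}, the fact that all the new exponents lie in $\{0\}\cup[1,\infty)$, and the $C^{1}$-boundedness supplied by the inductive hypothesis then give (4) and (5), and a last application of cell decomposition puts each piece into an open cell inside some $T_l\times(0,1)^{m}$.

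The crux of the proof — and the only place where real work is hidden — is the simultaneous propagation of conditions (4) and (5): the coordinate changes (translation by the centre, rescaling, and the bounded-exponent power substitution clearing the fractional powers) must be organized so that at every stage the monomials in all the associated monomial maps, together with their first derivatives, remain bounded on $(0,1)^{m}$. This is the delicate ``a-b-m'' bookkeeping of \cite[Section 4]{unif}. The one new point relative to \emph{loc.\ cit.} is that none of these manipulations act on $t$, so all constants involved — in particular the bound witnessing $C^{1}$-boundedness in $x$ — can be chosen uniformly over the parameter space.
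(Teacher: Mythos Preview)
Your outline follows the overall inductive architecture of the paper's proof, but the mechanism you propose for achieving the $C^1$-boundedness in $x$ of the associated bounded monomial maps is not the one used, and in fact it does not work as stated.

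First, the power substitution $x_i\mapsto x_i^{n}$ that you invoke to push all exponents into $\{0\}\cup[1,\infty)$ cannot handle the case $r_i<0$: multiplying a negative exponent by a positive integer only makes it more negative. Negative exponents genuinely occur here (the paper explicitly treats the case $r_{l,i}<0$), so there is no fixed $n$ that clears them. More fundamentally, even when all exponents are already $\geq 1$, the map $b_i(t,x)=a_i(t,x_{<m})\,x_m^{r_i}$ can fail to be $C^1$-bounded in $x_m$ because $a_i$ may be unbounded on $\pi_{<m}(C)$: for instance, on the cell $0<x_2<x_1^{2}$ the bounded monomial $b=x_1^{-1}x_2$ has $\partial b/\partial x_2=x_1^{-1}$. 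For the same reason, your inductive call on ``all the coefficients $a_i$, all the walls, and their first-order partial derivatives'' is not well-posed: neither the $a_i$ nor the first derivatives of the walls need be bounded, so one cannot feed them to the inductive hypothesis, which requires a bounded graph.

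The paper proceeds differently. After reducing to centre $0$, it first forces $|\partial b_{l}/\partial x_m|\leq 1$ by a change of variable: on the piece where some $|\partial b_{l,i}/\partial x_m|\geq 1$ is maximal, one replaces $x_m$ by $b_{l,i}$ itself (so the new $\partial/\partial x_m$ of every component is bounded by $1$ via the chain rule). Only then does it collect, not the raw coefficients $a_i$, but the bounded limits $h_\alpha,h_\beta$ of $b$ and $g_\alpha,g_\beta$ of $\partial b/\partial x_m$ along the walls $\alpha_m,\beta_m$, together with the walls themselves, and applies the inductive hypothesis to this bounded package. The $C^1$-boundedness of $b_{l,i}(\psi_{l,j}(t,x_{<m})_{<m},x_m)$ is then read off from the factorization $b_{l,i}=h_{\alpha,i}\cdot(x_m/\alpha_m)^{r_{l,i}}$ (using $\beta_m$ when $r_{l,i}>0$), where the factor $(x_m/\alpha_m)^{r_{l,i}}$ is bounded by $1$ and the $x_{<m}$-derivatives of $h_{\alpha,i}$ and of $g_{\alpha,i}$ are controlled by the inductive step. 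This $h_\alpha$--$g_\alpha$ bookkeeping, together with the preliminary change of variable that bounds $\partial b/\partial x_m$, is the substantive content that your sketch is missing.
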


\begin{proof}
The proof uses induction on $m$. The case $m=0$ is trivial. \green{By o-minimality, we may assume that there exist finitely many $f_{l}: C_{l} \subset T \times (0,1)^{m} \to X_{T}$, satisfying properties $1$, $2$ and $3$. For the rest of the proof, we focus on one such map $f_{l}$. Next, we apply Theorem (\ref{weierstrass}) to $f_{l}$ such that we may suppose that $f_{l}$ is prepared on $C_{l}$ with centre $\theta_{l}$. Note that some of the cells obtained by applying (\ref{weierstrass}) might not be open, but then it has lower dimension, therefore can be considered separately and can be ignored by induction. For this reason, whenever we further partition $C_{l}$, we always assume the cells are open.}
%

\green{Next}, we will show that we may assume $\theta_l = 0$ on $C_l$. Since $x_m \sim \theta_l(x_{<m})$ on $C_l$, there exists some $\epsilon \in (0,1)$ such that $\epsilon x_m < \theta_l(t,x_{<m}) < (1/\epsilon) x_m$ for all $(t,x) \in C_l$. Up to finite partitioning, if necessary, we may assume that either $\theta_l(t,x_{<m}) > x_m$ for all $(t,x) \in C_l$ or $\theta_l(t,x_{<m}) < x_m$ for all $(t,x) \in C_l$. In the first case, the other is similar, \green{consider the \emph{cell}}
\[
\tilde{C}_l = \{ (t,x) \in T \times (0,1)^m \mid (t,x_{<m},\frac{-1}{\epsilon}x_m + \theta_l(t,x_{<m})) \in C_l \}
\]
and define \green{$\tilde{f}_l: \tilde{C}_l \to X_{T}$ by $ \tilde{f}_{l}(t,x) =  f_l(t,x_{<m},\frac{-1}{\epsilon}x_m+\theta_l(t,x_{<m}))$}. Then $\tilde{f}_l$ is prepared on $\tilde{C}_l$ with centre zero. Moreover, properties $1$, $2$ and $3$ are still satisfied. Thus, from now on we will assume additionally that $f_l$ is prepared on $C_l$ with centre zero.

Next we show that, up to finite partitioning, if necessary, the associated bounded range map $b_l = (b_{l,1},\ldots,b_{l,N})$ of $f_l$ is $C^1$ and $| \partial b_l / \partial x_m | \leq 1$. That we may suppose that $b_l$ is $C^1$ is a classical consequence of the cell decomposition theorem. Up to further partitioning using o-minimality, we may suppose that there exists some $i \in \{1,\ldots,N\}$ such that for any other $i' \in \{1,\ldots,N\}$ we have: $|\partial b_{l,i} / \partial x_m | \geq |\partial b_{l,i'} / \partial x_m|$ on $C_l$ and that either $|\partial b_{l,i} / \partial x_m | \geq 1$ or $|\partial b_{l,i} / \partial x_m | < 1$ on $C_l$. The second case is exactly what we want. In the first case, we do a change of variables. To ensure that we recover a map $T \times (0,1)^m \to [-1,1]^{m-n}$, we first have to further partition $C_l$ such that $b_{l,i}$ is either identically $-1$,$0$ or $1$, $b_{l,i} > 0$ or $b_{l,i} < 0$. We only have to consider the last two cases. Suppose we have $
b_{l,i} > 0$ (the case $b_{l,i} < 0$ is similar up to changing a sign). Once more using o-minimality, we may assume that for fixed $(t,x_1,\ldots,x_{m-1})$ the map $x_m \mapsto b_{l,i}(t,x_1,\ldots,x_{m-1},x_m)$ is injective and it follows that the map 
\[
\phi: C_l \to \phi(C_l) : (t,x) \mapsto (t,x_1,\ldots,x_{m-1},b_{l,i}(t,x))
\]
is invertible. Set $\bar{C}_l = \phi(C_l)$ and $\bar{f}_l = f_l \circ \phi^{-1}$. Then $\bar{C}_l$ is an open cell in $T \times (0,1)^m$ and $\bar{f}_l$ satisfies the same properties as $f_l$, in particular it is prepared in $x_m$ with bounded range map $\bar{b_l}$ but moreover $|\partial \bar{b}_{l,i} / \partial x_m | \leq 1$ for all component functions $\bar{b}_{l,i}$ of $\bar{b}_l$ (by the chain rule and the choice of $i$). \vspace{0.3cm}

So up to now, we may suppose we have finitely many maps $f_l:C_l \to X_T$ satisfying the first three properties, that are prepared in $x_m$ and the associated bounded range map $b_l$ of $f_l$ is $C^1$ and $|\partial b_{l,i} / \partial x_m| \leq 1$. Thus, for any $i \in \{1,\ldots,N\}$, the component function $b_{l,i}$ of $b_l$ is given by:
\[
b_{l,i}(t,x) = a_{l,i}(t,x_{<m})x_m^{r_{l,i}}.
\]

Denote $\alpha_m$ for the wall of $C_l$ bounding the variable $x_m$ from below and $\beta_m$ for the wall of $C_l$ bounding $x_m$ from above. Up to further partitioning, we may suppose $\alpha_m \neq 0$ on $C_l$ and we will also suppose that $r_{l,i} \neq 0$ for every $i \in \{1,\ldots,N\}$. We will come back to this at the end of the proof. Now consider the following maps:
\begin{align*}
&h_\alpha: \pi_{<m}(C_l) \to [-1,1]^N : (t,x_{<m}) \mapsto \lim_{x_m \to \alpha_m(t,x_{<m})} b_l(t,x_{<m},x_m) \\
&g_\alpha: \pi_{<m}(C_l) \to [-1,1]^N : (t,x_{<m}) \mapsto \lim_{x_m \to \alpha_m(t,x_{<m})} \frac{\partial b_l}{\partial x_m}(t,x_{<m},x_m)
\end{align*}
and define $h_\beta$ and $g_\beta$ analogously. \green{Note that $g_{\alpha}$ and $g_{\beta}$ are well defined by our effort to bound $|\partial b_{l,i}/\partial x_{m}|$ by 1.} For any $i \in \{1,\ldots,N\}$, by the form of $b_{l,i}$, the component functions of $h_{\alpha,i}$ and $g_{\alpha,i}$ (similarly for $h_\beta$ and $g_\beta$) become:
\begin{align*}
&h_{\alpha,i}(t,x_{<m}) = a_{l,i}(t,x_{<m}) \alpha_m(t,x_{<m})^{r_{l,i}},  \\
&g_{\alpha,i}(t,x_{<m}) = r_{l,i}a_{l,i}(t,x_{<m}) \alpha_m(t,x_{<m})^{r_{l,i}-1} \quad (r_{l,i} \neq 0).
\end{align*}
Now define the map $F: \pi_{<m}(C_l) \to \text{Im}(F)$ whose component functions are $\alpha_m,\beta_m,h_\alpha,h_\beta,g_\alpha$ and $g_\beta$. Next, apply the induction hypothesis to the graph of $F$. Hence we obtain finitely many maps $\psi_{l,j}:D_{l,j} \to \text{graph}(F)$ satisfying all properties of the theorem. In particular, they are prepared in $x_{<m}$ with associated bounded monomial map $c_{l,j}$ that is $C^1$-bounded in $x_{<m}$. Therefore, by their form, the maps $\psi_{l,j}$ are $C^1$-bounded in \green{$x_{<m}$}. Note that $\psi_{l,j}(t,x_{<m})_{<m} \in \pi_{<m}(C_l)$ and thus, for instance, $\alpha_m(\psi_{l,j}(t,x_{<m})_{<m})$ is a component function of $\psi_{l,j}$. Now set
\begin{equation*}
C_{l,j} = \{ (t,x_{<m},x_m) \in D_{l,j} \times (0,1) \mid (\psi_{l,j}(t,x_{<m})_{<m},x_m) \in C_l \}.
\end{equation*}
By construction, these cells have all the properties we want. Finally define $f_{l,j}:C_{l,j} \to X_T$ by:
\begin{equation*}
f_{l,j}(t,x_{<m},x_m) = f_l(\psi_{l,j}(t,x_{<m})_{<m},x_m).
\end{equation*}
By the form of $f_l$, it remains to show that for any $i \in \{1,\ldots,N\}$, $b_{l,i}(\psi_{l,j}(t,x_{<m})_{<m},x_m)$ is prepared in $x$ and is $C^1$-bounded in $x$. Thus, let $b_{l,i}$ be a component function of $b_l$ and suppose $r_{l,i} < 0$ (in the case $r_{l,i} > 0$, use $\beta$ instead of $\alpha$ in the calculations). We have that:
\begin{align*}
b_{l,i}(\psi_{l,j}(t,x_{<m})_{<m},x_m) &= a_{l,i}(\psi_{l,j}(t,x_{<m})_{<m})x_m^{r_{l,i}} \\
&= h_{\alpha,i}(\psi_{l,j}(t,x_{<m})_{<m})\left(\frac{x_m}{\alpha_m(\psi_{l,j}(t,x_{<m})_{<m})}\right)^{r_{l,i}}.
\end{align*}
By the construction, $h_{\alpha,i}(\psi_{l,j}(t,x_{<m})_{<m})$ and $\alpha_m(\psi_{l,j}(t,x_{<m})_{<m})$ are prepared in $x_{<m}$ with associated bounded monomial map $c_{l,j}$ that is $C^1$-bounded in $x_{<m}$. It follows that $b_{l,i}(\psi_{l,j}(t,x_{<m})_{<m},x_m)$ is prepared in $x$. We now look at the $C^1$-norm. To simplify notation, denote $y = \psi_{l,j}(t,x_{<m})_{<m}$. Let $s \in \{1,\ldots,m-1\}$, then:
\begin{align*}
\frac{\partial}{\partial x_s} \left(b_{l,i}(y,x_m)\right) &= \frac{\partial}{\partial x_s}\left(h_{\alpha,i}(y)\left(\frac{x_m}{\alpha_m(y)}\right)^{r_{l,i}}\right) \\
&= \frac{\partial}{\partial x_s}(h_{\alpha,i}(y))\left(\frac{x_m}{\alpha_m(y)}\right)^{r_{l,i}} + h_{\alpha,i}(y)\frac{\partial}{\partial x_s}\left(\left(\frac{x_m}{\alpha_m(y)}\right)^{r_{l,i}}\right).
\end{align*}
The first term is bounded in $x$ since \green{$h_{\alpha,i}(y) = h_{\alpha,i}(\psi_{i,l}(t,x_{<m})_{<m})$} is $C^1$-bounded in $x_{<m}$ \green{(because it is prepared in $x_{<m}$, with $C^{1}$-bounded associated bounded monomial map)} and $(x_m/\alpha_m(y))^{r_{l,i}} <1$ since $x_m > \alpha_m(y)$ and $r_{l,i} < 0$. We further compute the last term:
\begin{align*}
h_{\alpha,i}(y)\frac{\partial}{\partial x_s}\left(\left(\frac{x_m}{\alpha_m(y)}\right)^{r_{l,i}}\right) &= -a_{l,i}(y)\alpha_m(y)^{r_{l,i}}\frac{r_{l,i}}{\alpha_m(y)} \left(\frac{x_m}{\alpha_m(y)}\right)^{r_{l,i}}\frac{\partial}{\partial x_s}\left(\alpha_m(y)\right) \\
&= -g_{\alpha,i}(y) \left(\frac{x_m}{\alpha_m(y)}\right)^{r_{l,i}} \frac{\partial}{\partial x_s}(\alpha_m(y)).
\end{align*}
Hence we see that also this term is bounded in $x$. Since $(\partial b_l / \partial x_m)$ was already bounded, we obtain that $b_{l,i}(\psi_{l,j}(t,x_{<m})_{<m},x_m)$ is $C^1$-bounded in $x$.

To conclude we explain the cases $\alpha_m = 0$ and $r_{l,i} = 0$. If $\alpha_m = 0$, this forces the exponent $r_{l,i}$ to be positive or zero since $b_l$ is bounded. If $r_{l,i} > 0$, we can just use the maps $h_\beta$ and $g_\beta$ as above since $\beta_m > 0$. If $r_{l,i} = 0$, one can use the induction hypothesis on $b_{l,i} = a_{l,i}$ immediately.
\end{proof}

We can now easily prove the main theorem using this parameterization result and Lemma (\ref{ACsub}). It refines the $C^r$-parameterization theorem of \cite{unif} by making the constant $d$ more explicit; we have $d = m^3$.

\begin{theorem}\label{maintheorem}
Suppose that $X_T$ is a definable family of $m$-dimensional subsets in $[-1,1]^n$. Then there exists a constant $c>0$ such that for any integer $r > 0$ and $t \in T$ there is a collection of finitely many analytic maps $$\{f_{r,i,t}:(0,1)^m \to X_t \mid i \in \{1,\ldots,cr^{m^3}\} \}$$ whose $C^r$-norm is bounded by $1$ and such that for any $t \in T$ the ranges of $f_{r,i,t}$, for $i = 1,\ldots,cr^{m^3}$, cover $X_t$. Moreover for each $i$ and $r$, $\{f_{r,i,t} \mid t \in T\}$ is a definable family of maps.
\end{theorem}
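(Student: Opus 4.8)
The plan is to feed the output of Theorem \ref{parameter} through Proposition \ref{wallsub}, fill in the resulting cells, and finish with Lemma \ref{ACsub}. By the reduction recalled above it suffices to treat the case that $X_T$ is the graph of a definable map $\varphi : C \to [-1,1]^{n-m}$ with $C$ open in $T\times(0,1)^m$, which is the setting of Theorem \ref{parameter}. That theorem produces finitely many families $f_l : C_l \to X_T$ with properties (1)--(5); in particular each $f_l$ is prepared in $x$ with $C^1$-bounded monomial map, and each $C_l$ has walls prepared in $x$ with $C^1$-bounded monomial maps, which are exactly the hypotheses of Proposition \ref{wallsub}. Fix such an $f_l$ and an integer $r>0$, and apply Proposition \ref{wallsub} componentwise in the target (mildness of a map being mildness of each component): the substitution $\phi_r(t,x)=(t,x_1^{r^m},\dots,x_m^{r})$ produces an open cell $C_{r,t}=(\phi_r^{-1}(C_l))_t\subset(0,1)^m$ whose walls of $x_1$ are $(A,0)$-mild (they are constants) and whose walls of $x_i$, $i\ge 2$, are $(r^mA,0)$-mild up to order $r$, and such that $(f_l\circ\phi_r)_t : C_{r,t}\to X_t$ is $(r^mA,0)$-mild up to order $r$, with $A$ independent of $t$ and $r$.

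Next I would fill the cell by the iterated linear interpolation $\Psi_{r,t}:(0,1)^m\to C_{r,t}$ given recursively by $x_i=\alpha_i(x_{<i})+y_i\bigl(\beta_i(x_{<i})-\alpha_i(x_{<i})\bigr)$, where $\alpha_i,\beta_i$ denote the walls of $C_{r,t}$, and estimate its $C^r$-behaviour. Each component $\Psi_{r,t,i}$ is assembled from the coordinate $y_i$ (which is $(1,0)$-mild) and from $\alpha_i\circ\Psi_{r,t,<i}$, $\beta_i\circ\Psi_{r,t,<i}$ by one composition, one subtraction, one product and one addition; applying the up-to-order-$r$ versions of Corollary \ref{compformula} and Proposition \ref{product} — the $B$-constants staying bounded by an absolute constant throughout, since every composition resets it via its $(\cdot,0)$-mild outer factor, and dropping back to $\le 1$ after the final composition below — one proves by induction on $i$ that $\Psi_{r,t,<i}$ is $\bigl(c_i\,r^{(i-1)m},0\bigr)$-mild up to order $r$, where $c_i$ depends only on $m$ and $f_l$: indeed composing $\Psi_{r,t,<i}$ with a wall of constant $\sim r^mA$ multiplies its constant by $\sim r^m$. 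Hence $\Psi_{r,t}$ is $\bigl(c\,r^{m(m-1)},0\bigr)$-mild up to order $r$, one factor $r^m$ per nontrivial coordinate; this accumulation, together with the $m$-fold subdivision below, is the source of the exponent $m^3$.

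Composing once more by Corollary \ref{compformula}, $(f_l\circ\phi_r)_t\circ\Psi_{r,t}:(0,1)^m\to X_t$ is $\bigl(c'\,r^{m^2},0\bigr)$-mild up to order $r$ for a constant $c'$ independent of $t$ and $r$. It is analytic, being a composite of power maps, of roots of the non-vanishing analytic function $F$ coming from the preparation, and of the monomials $x^{\mu}$ (analytic on $(0,1)^m$); its image equals $\im(f_{l,t})$, since $\phi_r$ and $\Psi_{r,t}$ are onto their targets; and it is a definable family in $t$. Finally, covering $(0,1)^m$ by $O\bigl((c'r^{m^2})^m\bigr)=O\bigl(r^{m^3}\bigr)$ open boxes of side about $1/(c'r^{m^2})$ and precomposing $(f_l\circ\phi_r)_t\circ\Psi_{r,t}$ with the affine substitutions of Lemma \ref{ACsub} (with $C=0$) carrying $(0,1)^m$ onto these boxes, one obtains for every box an analytic map $(0,1)^m\to X_t$ of $C^r$-norm $\le 1$. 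Ranging over all boxes and over the finitely many $l$, these maps cover $X_t$, number at most $c\,r^{m^3}$ for a constant $c$ independent of $t$, and form definable families in $t$ at each step; this proves the theorem.

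The step I expect to be the main obstacle is the inductive mildness estimate for the cell-filling map $\Psi_{r,t}$: one must track, uniformly in $t$ and $r$, both the $A$- and the $B$-constants through the alternating compositions, subtractions, products and additions. All the quantitative content of the theorem — in particular the precise exponent $m^3$ — lives in this bookkeeping, whereas the reduction to Theorem \ref{parameter}, the passage through Proposition \ref{wallsub}, and the final subdivision via Lemma \ref{ACsub} are routine.
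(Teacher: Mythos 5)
Your proposal follows essentially the same route as the paper's proof: reduce to a graph, apply Theorem (\ref{parameter}), pass through Proposition (\ref{wallsub}) with the substitution $\phi_r$, fill the cell by the iterated wall-interpolation map (your $\Psi_{r,t}$ is the paper's $\Phi = \Phi_1 \circ \cdots \circ \Phi_m$), obtain an $(A''r^{m^2},0)$-mild map up to order $r$ via Corollary (\ref{compformula}) and Proposition (\ref{product}), and conclude with Lemma (\ref{ACsub}) by subdividing $(0,1)^m$ into $O(r^{m^3})$ boxes. The bookkeeping of the $A$- and $B$-constants you flag as the delicate point is handled in the paper exactly as you describe, so the argument is correct as proposed.
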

\begin{proof}
As we explained below Definition (\ref{cell}), we may suppose that $X_T$ is the graph of a definable function $T \times (0,1)^m \to [-1,1]^{n-m}$. By the previous result, we obtain finitely many $f: C \to X_T$, where $C$ is an open cell in $T \times (0,1)^m$. Furthermore, $f$ is prepared in $x$ with associated bounded monomial map $b$ that is $C^1$-bounded in $x$ and also the walls of $C$ are of this form. By Proposition (\ref{wallsub}) using the map 
\[
(t,x_1,\ldots,x_m) \mapsto (t,x_1^{r^m},x_2^{r^{m-1}},\ldots,x_m^r),
\]
we get finitely many maps $f_{r}: C_{r} \to X_T$, where $C_{r}$ is open in $T \times (0,1)^m$. Moreover, there is an $A > 0$ such that for any $t \in T$, the walls $\alpha_1$ and $\beta_1$ of $C_{r,t}$ of $x_1$ are $(A,0)$-mild, for $i = 2,\ldots,m$, the walls $\alpha_i$ and $\beta_i$ of $x_i$ are $(Ar^m,0)$-mild up to order $r$ and $f_{r,t}: C_{r,t} \to X_t$ is $(Ar^{m},0)$-mild up to order $r$.
 
We now inductively map $T \times (0,1)^m$ on the cell $C_r$ as follows. For $i = 1,\ldots,m$, define the map $$\Phi_i: \pi_{<i}(C_r) \times (0,1)^{m-i+1} \to \pi_{< i+1}(C_r) \times (0,1)^{m-i}$$ given by:
 \[
(t,x) \mapsto (t,x_1,\ldots,x_{i-1},((\beta_i - \alpha_i)x_i + \alpha_i)(t,x_{<i}),x_{i+1},\ldots,x_m)
\]
and let $\Phi = \Phi_1 \circ \ldots \circ \Phi_m: T \times (0,1)^m \to C_r$. If $i \geq 2$, since for any $t \in T$, $\alpha_i$ and $\beta_i$ are $(Ar^m,0)$-mild up to order $r$, there is some $A'>0$ such that for any $t \in T$, the map $\Phi_{i,t}$ is $(A'r^m,0)$-mild up to order $r$, where $A'>0$ is possibly larger than $A$ due to the product (\ref{product}) and addition of mild functions. If $i = 1$, $\Phi_{1,t}$ is $(A',0)$-mild, for possibly larger $A'$. Thus we obtain that there is some $A'' > 0$, possibly larger than $A'$ by Corollary (\ref{compformula}), such that for any $t \in T$, the map $(f_r \circ \Phi)_t$ is $(A''r^{m^2},0)$-mild up to order $r$.
 
 Finally, by Lemma (\ref{ACsub}), covering the unit cube with cubes of size $1/(A''r^{m^2})$, we obtain $cr^{m^3}$ maps that parametrize $X_T$ with bounded $C^r$-norm.
\end{proof}

\emph{Remark.} The main obstacle to obtain a better result (e.g. $cr^m$ maps as in \cite{ccs}), is the power substitution using $r^{m}$ to ensure that the walls of the cell are mild up to order $r$ too. Thus it would be interesting to prove a stronger version of Theorem (\ref{parameter}) that has better walls. 

A small improvement that we show now slightly improves the walls. More precisely, if $m \geq 2$, we will show that we can first improve the walls bounding the variable $x_2$ and use a slightly better power substitution with powers up to $r^{m-1}$ (instead of $r^m$).

Suppose we are given a map as a result of Theorem (\ref{parameter}), call it $f: C \to X_T$, where $C$ is an open cell in $T' \times (0,1)^m$ and $T'$ a cell contained in $T$. Now, \green{since} the walls bounding $x_1$ and $x_2$ are prepared in $x$, \green{the first two inequalities defining $C_{t}$ are:}
\begin{align*}
\alpha_1(t) < &\ x_1 < \beta_1(t) \\
a_i(t)x_1^rF(a(t,x_1)) < &\ x_2 < b_j(t)x_1^sG(b(t,x_1)).
\end{align*}

We will now manipulate $f$ and $C$ to ensure that the maps $a_i(t)x_1^r$ and $b_j(t)x_1^s$ become mild for any $t \in T$, by transforming $r$ and $s$ to natural numbers. If they are already mild, there is no power substitution required for these walls and we can use a better version of Proposition (\ref{wallsub}). Suppose that $r-s \geq 0$, the other case is similar. Since $G$ is analytic and non-vanishing on $\text{Im}(b)$, there exists $S>0$ such that $G(b(t,x_1)) \in (1/S,S)$ for all $t$ and $x$. Equivalently: $G(b(t,x_1))/S \in (1/S^2,1)$. Now consider the map $\phi$ given by:
\begin{equation*}
\phi:T \times (0,1)^m \to \text{Im}(\phi): \phi(t,x_1,\ldots,x_m) = (t,x_1,b_j(t)x_1^sSx_2,x_3,\ldots,x_m).
\end{equation*}
Since $b_j(t)x_1^s$ is $C^1$-bounded in $x_1$ by assumption, the map $\phi$ is $C^1$-bounded in $x$. We see that $\phi^{-1}(C)$ is a cell where the walls bounding $x_1$ and $x_2$ are now given by:
\begin{align*}
\alpha_1(t) < &\ x_1 < \beta_1(t) \\
(a_i/b_j)(t)x_1^{r-s}F(a(t,x_1))/S < &\ x_2 < G(b(t,x_1))/S.
\end{align*}
By the form of $\phi$, the walls bounding $x_3,\ldots,x_m$ are still prepared in $x$ and their associated bounded monomial map is $C^1$-bounded in $x$. If $r-s = 0$, the walls are already as desired. Denote by $R$ the smallest integer greater than or equal to $r-s$. We then use the power substitution $$(t,x_1,x_2,\ldots,x_m) \mapsto (t,x_1^{R/(r-s)},x_2,\ldots,x_m)$$ to obtain a cell $\tilde{C}$ and a map $\tilde{f}: \tilde{C} \to X_T$ with the same properties as $f$, i.e. $\tilde{f}$ and all of the walls of $\tilde{C}$ are prepared in $x$ with an associated bounded monomial map that is $C^1$-bounded in $x$. The walls of $\tilde{C}$ bounding $x_1$ and $x_2$ are of the form:
\begin{align*}
\tilde{\alpha}_1(t) < &\ x_1 < \tilde{\beta}_1(t) \\
\tilde{a}_i(t)x_1^NF(\tilde{a}(t,x_1)) < &\ x_2 < G(\tilde{b}(t,x_1))
\end{align*}
where $N \in \N$. We now have that for any $t \in T$, $\tilde{a}_i(t)x_1^N$ is mild. One now proceeds as in the proof of (\ref{maintheorem}) using a slightly modified version of Proposition (\ref{wallsub}), namely with the power substitution
\[
(t,x_1,\ldots,x_m) \mapsto (t,x_1^{r^{m-1}},x_2^{r^{m-1}},x_3^{r^{m-2}},\dots,x_m^r).
\]
In the proof of Theorem (\ref{maintheorem}), we would now have that there is some $A>0$ such that for any $t \in T$, the walls $\alpha_i$ and $\beta_i$ of $x_i$ are $(Ar^{m-1},0)$-mild up to order $r$ and $f_{r,t}$ is $(Ar^{m-1},0)$-mild up to order $r$. It follows that for some possibly larger $A$ and for any $t \in T$, $(f_r \circ \Phi)_t$ is $(Ar^{m(m-1)},0)$-mild up to order $r$ and thus obtain $cr^{m^2(m-1)}$ maps as result.

It would be interesting to know if we could also transform the walls of the other variables in this way. However, currently it is not clear how to achieve this.

\end{document}